\newtheorem{théorème}{Théorème}[section]
\newtheorem{lemme}[théorème]{Lemme}
\newtheorem{proposition}[théorème]{Proposition}
\newtheorem{corollaire}[théorème]{Corollaire}
\newtheorem*{théorème_vide}{Théorème}
\newtheorem*{définition_vide}{Définition}
\theoremstyle{remark}
\numberwithin{equation}{section}
\DeclareMathOperator{\ric}{Ric}
\DeclareMathOperator{\scal}{Scal}
\DeclareMathOperator{\R}{R}
\DeclareMathOperator{\Se}{S}
\DeclareMathOperator{\tr}{tr}
\title{Exemples de sous variétés de $\mathbb{S}^n$; les variétés proprement biharmoniques avec indice et les variétés proprement C--harmoniques.}
\author{Vincent Bérard}
\begin{document}

\maketitle

\section{Introduction}

Soient $(M,g)$ et $(N,h)$ deux variétés riemanniennes de dimension $m$ et $n$, on définit l'énergie des applications de $M$ dans $N$ par la fonctionnelle $ E(\varphi) = \frac{1}{2} \int_M |T\varphi|^2_{g,h} \,dvol_g$, où $T\varphi$ désigne l'application tangente de $\varphi$. Ses points critiques sont appelés les applications harmoniques de $M$ dans $N$ et un résultat classique les caractérise comme étant les solutions de l'EDP elliptique d'ordre $2$ non--linéaire $\delta T\varphi=0$, où $\delta$ désigne la divergence canonique du fibré $\Omega^1(M) \otimes \varphi^* TN$. Les applications biharmoniques sont définis comme étant les points critiques de la biénergie $E^2(\varphi) := \frac{1}{2} \int_M |\delta T\varphi|_g^2 \, dvol_g$, c'est--à--dire, les solutions de l'EDP elliptique d'ordre $4$ non--linéaire suivante\,:
\begin{equation}
 \label{equation_biharmonique}
 \Delta \delta T\varphi - \Se(\delta T\varphi) = 0,
\end{equation}
avec $\Se$ l'endomorphisme de $\varphi^* TN$ défini par $\Se(X) = \tr \R^h_{\textstyle{X,T\varphi}} T\varphi$ où la trace est prise par rapport à $g$ et $\R^h$ est le tenseur de courbure de $(N,h)$. Il s'agit d'une classe d'applications qui englobe les applications harmoniques et qui permet par exemple, de donner une nouvelle démonstration du théorème d'Eells et Sampson sur l'existence d'applications harmoniques dans les classes d'homotopie (voir \cite{MR2124627}). On sait que les applications harmoniques n'existent pas toujours (voir l'article d'Eells et Wood \cite{MR0420708} pour des exemples d'obstructions topologiques) et un des principaux objectifs de cette théorie est de prouver l'existence d'application biharmonique dans ces cas là.

\vskip 0.2cm

L'énergie est une fonctionnelle invariante conforme par rapport à $g$ quand $M$ est de dimension $2$, ce n'est pas le cas pour les autres dimensions. Néanmoins, il existe un analogue en dimension supérieure, c'est--à--dire une fonctionnelle invariante conforme pour les applications entre deux variétés riemanniennes, dont la variété de départ est de dimension $n$ paire. Ses points critiques satisfont une EDP elliptique d'ordre $n$ non--linéaire qui est invariante conforme par rapport à la variété de départ, on les appelle les applications conforme--harmoniques. On pourra consulter \cite{MR2722777} pour avoir plus de détails sur l'existence et des exemples de telles applications. Il existe une version C--harmonique en dimension $4$ du théorème d'Eells et Sampson cité ci--dessus ; récemment Biquard et Madani on montré dans \cite{2011arXiv1112.6130B}, que sous certaines hypothèses de courbures de $(M^4,g)$ et $(N,h)$, il existe une application C--harmonique dans chaque classe d'homotopie de $C^{\infty}(M,N)$. La construction de ces EDP est explicite et en dimension $4$ on obtient qu'une application $\varphi$ est C--harmonique de $(M^4,[g])$ dans $(N,h)$ si et seulement si
\begin{equation}
 \label{equation_C-harmonique_dim4_intro}
 \Delta \delta T\varphi - \Se(\delta T\varphi) + \delta (\frac{2}{3}\scal - 2\ric) T\varphi = 0,
\end{equation}
où $\scal$ et $\ric$ désignent respectivement la courbure scalaire et le tenseur de Ricci de $g$ . Remarquons que par rapport à l'équation de la simple biharmonicité (\ref{equation_biharmonique}), il y a un terme en courbure en plus dans notre équation (\ref{equation_C-harmonique_dim4_intro}) qui est d'ordre $2$ en l'application. Intéressons nous au cas particulier où la variété de départ est la sphère $\mathbb{S}^n$, ce terme supplémentaire est alors un multiple du laplacien de $\varphi$. C'est ce point de vue qu'on va généraliser, en regardant ce type d'équation où l'on se fixe arbitrairement ce facteur multiplicatif sur une variété $M$ de dimension quelconque. Il s'agit d'une généralisation des applications harmoniques qui coïncide avec les applications biharmoniques quand $k=0$.
\begin{définition_vide}
 Soit $k$ un nombre réel, une application $\varphi$ de $M$ dans $\mathbb{S}^n$ est dite biharmonique d'indice $k$ si elle est solution de l'équation suivante\,:
 \begin{equation*}
  \Delta \delta T\varphi - \Se(\delta T\varphi) + k\, \delta T\varphi = 0.
 \end{equation*}
 On appelle variété biharmonique d'indice $k$ de $\mathbb{S}^n$, une sous variété de $\mathbb{S}^n$ dont l'inclusion est biharmonique d'indice $k$.
\end{définition_vide}
Si l'inclusion est harmonique, la variété est biharmonique pour n'importe quel indice. Une question naturelle est alors de déterminer les variétés biharmoniques d'indice $k$ dont l'inclusion n'est pas harmonique, on les appellera les variétés proprement biharmoniques d'indice $k$. Il s'agit là, de la motivation principale de cet article.

\vskip 0.2cm

Considérons que $M$ est une sous variété de $N$, on note $i$ son inclusion. On identifie $TM$ à un sous fibré de $i^*TN$ et on note $NM$ son complémentaire orthogonale par rapport à $g$ qu'on appelle fibré normal (de l'inclusion $i$). On note $B$ la seconde forme fondamentale de $i$, $A$ l'opérateur de Weingarten et $H$ la courbure moyenne définie par $H := \frac{1}{m} \tr B = -\frac{1}{m} \delta Ti$, où la trace est prise par rapport à $g$. On rappelle qu'une sous variété est dite pseudo--ombilicale si $A_H = |H|^2 Id_{TM}$. Dans cet article, nous étudierons dans un premier temps les variétés proprement biharmoniques avec indice de la sphère. Un premier résultat est de remarquer que la dimension de la sous variété joue un rôle critique pour l'indice de biharmonicité. En effet, nous verrons avec la proposition \ref{proposition_variétés_biharmoniques_indicées_compactes}, que les sous variétés compactes de dimension $m$ ne sont pas proprement biharmoniques pour les indices strictement supérieurs à $m$. De plus, si $M$ est pseudo--ombilicale, alors il existe un intervalle $I = [\delta, m]$, tel que $M$ est proprement biharmonique d'indice $k \in I$ si et seulement si la courbure moyenne de $M$ est parallèle et de norme égale à une constante qui dépend de $k$ et $m$.

\vskip 0.2cm

Pour les variétés proprement biharmoniques de la sphère, il existe deux exemples fondamentaux qui sont l'hypersphère de rayon $\frac{1}{\sqrt{2}}$ qui est pseudo--ombilicale et certains tores de Clifford qui ne sont pas pseudo--ombilicals. Les sous variétés biharmoniques avec indice généralisent cette situation en faisant apparaître de manière naturelle toutes les hypersphères différentes de l'équateur et une classe plus vaste de tores de Clifford, qui sont tous proprement biharmoniques pour un certain indice. Pour les premières, remarquons qu'une sous variété d'une hypersphère différente de l'équateur ne peut être à la fois harmonique dans l'hypersphère et dans la sphère. On obtient alors le théorème \ref{théorème_hypersphère_1} ; une sous variété d'une hypersphère différente de l'équateur est harmonique dans cette hypersphère si et seulement si elle est proprement biharmonique pour un certain indice (qui dépend exclusivement de sa dimension et de l'hypersphère en question) dans la sphère. En faisant les hypothèses sur la courbure moyenne de la proposition \ref{proposition_variétés_biharmoniques_indicées_compactes} pour les variétés pseudo--ombilicales, on obtient le théorème \ref{théorème_hypersphère_2}\,:
\begin{théorème_vide}
 Soit $M$ une sous variété pseudo--ombilicale de courbure moyenne parallèle et de norme constante non nulle de $\mathbb{S}^n$, alors\,
 \begin{enumerate}
  \item $M$ est proprement biharmonique d'indice $m(1-|H|^2)$ dans $\mathbb{S}^n$,
  \item $M$ est harmonique dans $\mathbb{S}(\frac{1}{\sqrt{1+|H|^2}})$.
 \end{enumerate}
\end{théorème_vide}
L'indice de biharmonicité d'une sous variété compacte non--harmonique est majoré par sa dimension, en ce qui concerne les tores de Clifford, on a une borne optimale qui est égale à $n_1 + n_2 - 2\sqrt{n_1 n_2}$ (proposition \ref{proposition_borne_tore}). Il existe le même genre de résultats que pour les sous variétés des hypersphères, il faut alors considérer des sous variétés produits du tore de Clifford (voir théorème \ref{théorème_tore_1}).

\vskip 0.2cm

Dans un deuxième temps, on va étudier les sous variétés de la sphère, dont l'inclusion est proprement C--harmonique. On les appelle les variétés proprement C--harmoniques de la sphère qui sont donc de dimension paire et on se contentera de leur étude en basses dimensions, c'est--à--dire pour $m=4$ ou $m=6$. On obtient un résultat de rigidité pour les sous variétés compactes pseudo--ombilicales de dimension $m$ de $\mathbb{S}^n$ avec les propositions \ref{proposition_compacte_C--harmonique_4} et \ref{proposition_compacte_C--harmonique_6}; elle ne sont pas proprement C--harmoniques si $n \geq m+2$. En revanche, quand $n=m+1$ et que la courbure moyenne est minorée par un certaine constante strictement positive, alors elle est proprement C--harmonique si et seulement si sa courbure moyenne est parallèle et de norme constante non nulle. En basse dimension, on obtient l'existence et l'unicité d'hypersphères qui soient proprement C--harmonique, alors que tous les tores de Clifford ne sont pas proprement C--harmonique. Il convient de remarquer que le cas $m=4$ est bien évidemment équivalent à la biharmonicité pour un certain indice, alors que pour $m=6$, il s'agit d'une équation d'ordre $6$ qui a pour effet d'augmenter de manière significative, la complexité des calculs.

\subsection{Les variétés proprement biharmoniques avec indice dans $\mathbb{S}^n$}

\subsubsection{Définitions et propriétés}

Soient $(M,g)$ et $(N,h)$ deux variétés riemanniennes de dimension $m$ et $n$, on se donne $k$ un nombre réel, la définition suivante généralise la notion d'harmonicité et se confond avec la biharmonicité quand $k=0$\,:
\begin{définition_vide}
 \label{définition_applications_biharmoniques_indicées}
 Une application $\varphi$ de $M$ dans $N$ est dite biharmonique d'indice $k$ si elle est solution de l'équation elliptique d'ordre $4$ non--linéaire suivante\,:
 \begin{equation*}
  \Delta \delta T\varphi - \Se(\delta T\varphi) + k\, \delta T\varphi = 0,
 \end{equation*}
 avec $\Se$ l'endomorphisme de $\varphi^* TN$ défini par $\Se(X) = \tr \R^h_{\textstyle{X,T\varphi}} T\varphi$, où la trace est prise par rapport à $g$ et $\R^h$ est le tenseur de courbure de $(N,h)$. 
\end{définition_vide}

Il est facile de montrer que les applications biharmoniques d'indice $k$ sont les points critiques de la fonctionnelle suivante\,:
\begin{equation*}
 \mathcal{E}^2_k (\varphi) = \int_M |\delta T\varphi|^2 + k |T\varphi|^2 \,dvol_g.
\end{equation*}

Si $(M,g)$ est une variété d'Einstein de dimension $4$, alors les applications biharmoniques d'indice $\frac{1}{6}\scal^g$ sont les applications C--harmoniques de $(M,[g])$ dans $(N,h)$, on y reviendra dans la troisième partie. Une remarque très importante est de noter qu'une application harmonique est biharmonique pour n'importe quel indice, la bonne notion à étudier est donc la \og propre biharmonicité avec indice\fg, on a ainsi l'unicité de l'indice $k$.

\begin{définition_vide}
 Une application biharmonique d'indice $k$ de $M$ dans $N$ l'est proprement si elle n'est pas harmonique de $M$ dans $N$.
\end{définition_vide}

Supposons maintenant que $N$ est la sphère $\mathbb{S}^n$ munie de sa métrique canonique et que $(M,g)$ est une variété riemannienne quelconque, alors $\varphi$ est biharmonique d'indice $k$ de $M$ dans $\mathbb{S}^n$ si et seulement si
\begin{equation*}
 (\Delta - m + k)\, \delta T\varphi = 0.
\end{equation*}
Dans toute la suite, on va considèrer que $M$ est une sous variété de la sphère $\mathbb{S}^n$, on définit alors les variétés biharmoniques avec indice de la façon suivante\,:
\begin{définition_vide}
 Une sous variété $M$ de $\mathbb{S}^n$ est dite biharmonique d'indice $k$ dans $\mathbb{S}^n$, si son inclusion est biharmonique d'indice $k$ de $M$ dans $\mathbb{S}^n$, c'est--à--dire si et seulement si
 \begin{equation}
  \label{equation_biharmonique_indicée}
  (\Delta - m + k)\, H = 0,
 \end{equation}
 où $H$ désigne la courbure moyenne de $M$ dans $\mathbb{S}^n$. Si de plus, l'inclusion n'est pas harmonique, alors on dira que $M$ est proprement biharmonique d'indice $k$ dans $\mathbb{S}^n$.
\end{définition_vide}
 La proposition suivante caractérise les variétés biharmoniques avec indice\,:
\begin{proposition}
 \label{proposition_biharmonique_indicée}
 La variété $M$ est biharmonique d'indice $k$ dans $\mathbb{S}^n$ si et seulement si on a le système suivant\,:
\begin{center}
 $\left\{\begin{array}{ll}
  \Delta^N H - (m - k)\, H + \tr B\big(\,\cdot\,,A_H (\,\cdot\,)\big)    &= 0, \\
  \frac{m}{2}\, d|H|^2 + 2 \tr A_{\nabla^N_{(\,\cdot\,)} H} (\,\cdot\,)  &= 0,
 \end{array}\right.$
\end{center}
 où $H$, $B$, $A$ désignent respectivement la courbure moyenne, la seconde forme fondamentale et l'opérateur de Weingarten de $M$, la trace est prise par rapport à $g$.
\end{proposition}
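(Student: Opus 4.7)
La stratégie est de décomposer l'équation caractéristique $(\Delta - m + k) H = 0$ (énoncée juste avant la proposition) selon la somme orthogonale $i^* T\mathbb{S}^n = TM \oplus NM$. Comme $H$ est un champ normal, le terme $(-m+k) H$ appartient entièrement à $NM$\,: l'équation est donc équivalente à l'annulation simultanée des parties normale et tangente de $\Delta H$, qui fourniront respectivement les deux lignes du système.

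Pour calculer $\Delta H$ (laplacien brut d'une section de $i^* T\mathbb{S}^n$), je partirais de la formule de Weingarten appliquée à $H \in \Gamma(NM)$\,: pour tout $X \in \Gamma(TM)$,
$$\nabla^{\mathbb{S}^n}_X H = \nabla^N_X H - A_H(X).$$
En dérivant une seconde fois et en appliquant sur chaque morceau la formule de Gauss (pour $A_H(X) \in TM$) ou de Weingarten (pour $\nabla^N_X H \in NM$), on obtient
$$\nabla^{\mathbb{S}^n}_Y \nabla^{\mathbb{S}^n}_X H = \nabla^N_Y \nabla^N_X H - A_{\nabla^N_X H}(Y) - \nabla^M_Y (A_H(X)) - B(Y, A_H(X)).$$
En prenant la trace sur un repère orthonormé local $(e_i)$ (avec la convention $\Delta = -\tr \nabla^2$), la partie normale donne immédiatement $(\Delta H)^N = \Delta^N H + \tr B(\cdot, A_H(\cdot))$, et la partie tangente se regroupe en $(\Delta H)^T = \tr A_{\nabla^N_{\cdot} H}(\cdot) + \tr (\nabla^M A_H)(\cdot)$.

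La première équation du système se lit alors directement sur la partie normale de l'équation $(\Delta - m + k) H = 0$. Pour la seconde, le point technique consiste à simplifier $\tr (\nabla^M A_H)(\cdot)$. Je dériverais la relation duale $\langle A_\eta X, Y\rangle = \langle B(X,Y), \eta\rangle$ le long d'un champ $Z$\,: cela fait apparaître $(\nabla^N_Z B)(X,Y)$ d'un côté, et un terme en $A_{\nabla^N_Z \eta}$ de l'autre. En prenant $\eta = H$, en sommant sur $e_i$, et en appliquant l'équation de Codazzi $(\nabla^N_X B)(Y,Z) = (\nabla^N_Y B)(X,Z)$ — valide car $\mathbb{S}^n$ est à courbure constante —, on trouve $\sum_i (\nabla^N_{e_i} B)(e_i, Y) = m\, \nabla^N_Y H$, d'où
$$\tr (\nabla^M A_H)(\cdot) = \tfrac{m}{2}\, \nabla|H|^2 + \tr A_{\nabla^N_{\cdot} H}(\cdot).$$
En substituant dans $(\Delta H)^T = 0$ et en passant au dual via la métrique, on retrouve exactement $\tfrac{m}{2}\, d|H|^2 + 2\, \tr A_{\nabla^N_{\cdot} H}(\cdot) = 0$.

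Le seul point vraiment délicat me semble être la gestion rigoureuse des conventions (signe du laplacien, direction de $B$, choix d'un repère orthonormé normal au point considéré) et l'application soigneuse de Codazzi en courbure constante pour faire apparaître le facteur $m$\,; une fois ceci maîtrisé, tout le reste n'est qu'une application directe des formules de Gauss-Weingarten.
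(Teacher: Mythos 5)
Votre démonstration est correcte et suit essentiellement la même voie que celle de l'article\,: celui-ci invoque la formule pour $\Delta H$ de la proposition \ref{proposition_Delta_X,U,H} (établie en appendice via le lemme \ref{lemme_nabla_nabla}), puis décompose $(\Delta - m + k)\,H = 0$ sur $TM \oplus NM$, exactement comme vous le faites. Votre calcul de $\Delta H$ par les formules de Gauss--Weingarten et l'équation de Codazzi ne fait que reproduire en ligne la preuve du lemme d'appendice, où l'interversion des dérivées de $B$ joue précisément le rôle de Codazzi en courbure constante, et vos conventions de signe (laplacien positif, $H = \frac{1}{m}\tr B$) sont cohérentes avec celles du texte.
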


\begin{proof}
 D'après l'égalité (\ref{equation_biharmonique_indicée}) et la proposition \ref{proposition_Delta_X,U,H}, la variété $M$ est biharmonique d'indice $k$ si et seulement si
 \begin{equation*}
  \Delta^N H - (m-k)\, H + \tr B\big(\,\cdot\,,A_H(\,\cdot\,)\big) + \frac{m}{2}\, d|H|^2 + 2 \tr A_{\nabla^N_{(\,\cdot\,)} H}(\,\cdot\,) = 0,
 \end{equation*}
 il suffit ensuite de décomposer sur le fibré tangent et le fibré normal de $M$.
\end{proof}

La proposition précédente nous donne deux résultats fondamentaux pour les variétés pseudo--ombilicales. Le premier est de remarquer que d'après le lemme \ref{lemme_nabla_nabla}, la deuxième équation de la proposition \ref{proposition_biharmonique_indicée} est alors équivalente à $(m-4) \,d|H|^2 = 0$. On obtient alors le théorème suivant, qui généralise aux variétés biharmoniques avec indice le théorème 5.1 de Balmus, Montaldo et Oniciuc (\cite{MR2448058}).
\begin{théorème}
 Soit $M$ une variété pseudo--ombilicale biharmonique d'indice $k$ dans $\mathbb{S}^n$ qui n'est pas de dimension $4$, alors sa courbure moyenne est de norme constante.
\end{théorème}

Notons que Balmus, Montaldo et Oniciuc ont conjecturé toujours dans \cite{MR2448058} que les variétés proprement biharmoniques dans $\mathbb{S}^n$ sont à courbure moyenne constante. Supposons de plus, que la courbure moyenne de $M$ est parallèle (c'est--à--dire  $\nabla^N H=0$) et de norme constante non nulle, alors la première équation de la proposition \ref{proposition_biharmonique_indicée} implique que $M$ est proprement biharmonique d'indice $m(1-|H|^2)$. Nous allons voir ci--dessous que sous certaines hypothèses sur $k$, cette situation décrit entièrement les variétés compactes et pseudo--ombilicales qui sont proprement biharmoniques d'indice $k$. Le résultat suivant constitue la deuxième conséquence importante de la proposition \ref{proposition_biharmonique_indicée}.
\begin{proposition}
 \label{proposition_variétés_biharmoniques_indicées_compactes}
 Supposons que $M$ est compacte,
 \begin{itemize}
  \item si $m<k$, alors $M$ est biharmonique d'indice $k$ si et seulement si $M$ est harmonique.
  \item si $m(1- \inf_{p \in M} |H_p|^2) \leq k < m$, $M$ est pseudo--ombilicale, alors $M$ est proprement biharmonique d'indice $k$ si et seulement si la courbure moyenne est parallèle, de norme constante non nulle et qu'elle vérifie $k = m(1-|H|^2)$.
 \end{itemize}
\end{proposition}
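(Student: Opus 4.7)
Dans les deux points, ma stratégie consiste à prendre le produit scalaire de la première équation du système de la proposition \ref{proposition_biharmonique_indicée} avec $H$, puis à intégrer sur $M$ compacte. Par intégration par parties, $\int_M \langle \Delta^N H, H \rangle\,dvol_g = \int_M |\nabla^N H|^2\,dvol_g$, tandis que la relation classique $\langle B(X,Y), H \rangle = \langle A_H X, Y \rangle$ fournit $\langle \tr B(\,\cdot\,, A_H(\,\cdot\,)), H \rangle = |A_H|^2$.

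Pour le premier point, j'obtiendrais ainsi
\begin{equation*}
 \int_M |\nabla^N H|^2 + (k-m) |H|^2 + |A_H|^2 \, dvol_g = 0.
\end{equation*}
Sous l'hypothèse $k > m$, les trois intégrandes sont positifs ou nuls, donc $H \equiv 0$ et $M$ est harmonique.

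Pour le second point, la pseudo--ombilicité $A_H = |H|^2 Id_{TM}$ simplifie $\tr B(\,\cdot\,, A_H(\,\cdot\,)) = m |H|^2 H$ et $|A_H|^2 = m |H|^4$. La même intégration donne alors
\begin{equation*}
 \int_M |\nabla^N H|^2 + (k - m + m |H|^2)|H|^2 \,dvol_g = 0.
\end{equation*}
L'observation cruciale est que l'hypothèse $k \geq m(1 - \inf_{p \in M}|H_p|^2)$ entraîne la positivité ponctuelle de $k - m + m|H_p|^2$ sur $M$ tout entière. Les deux intégrandes étant alors positifs ou nuls, ils sont identiquement nuls, ce qui donne $\nabla^N H = 0$ (donc $|H|$ constant via $d|H|^2 = 2\langle \nabla^N H, H \rangle$) et $(k - m + m|H|^2)|H|^2 = 0$ en chaque point. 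Comme la propre biharmonicité interdit $H \equiv 0$, je conclurais que $|H|$ est une constante non nulle et que $k = m(1-|H|^2)$.

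L'obstacle principal me semble être de reconnaître que la borne $m(1 - \inf|H|^2)$ est précisément celle qui assure la positivité ponctuelle du coefficient $k - m + m|H|^2$, ce qui rend l'argument intégral concluant ; tout le reste est une exploitation directe des deux équations du système. La réciproque est immédiate\,: en reportant $H$ parallèle, $|H|$ constante non nulle et $k = m(1-|H|^2)$ dans les deux équations de la proposition \ref{proposition_biharmonique_indicée}, on vérifie qu'elles sont toutes deux satisfaites.
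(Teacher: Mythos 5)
Your proposal is correct and follows essentially the same route as the paper: integrate the equation against $H$ over the compact $M$ and exploit the pointwise nonnegativity of $k-m+m|H|^2$ guaranteed by $k \geq m(1-\inf|H|^2)$. The only cosmetic differences are that for the first point the paper pairs the full equation $(\Delta-m+k)H=0$ with $H$ (your version with the normal component is the same computation, since $|\nabla H|^2 = |\nabla^N H|^2 + |A_H|^2$), and for the second point the paper phrases the integration by parts via the Weitzenböck identity $\frac{1}{2}\Delta(|H|^2) = g(\Delta^N H,H) - |\nabla^N H|^2$, which is equivalent to your direct argument.
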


Cette proposition qui s'inspire des travaux de Caddeo, Montaldo et Oniciuc sur les applications biharmoniques (voir \cite{MR1863283} et \cite{MR2004799}) donne aussi un résultat de rigidité quand $m<k$ ; il n'existe pas de sous variétés compactes proprement biharmoniques d'indice $k$ dans $\mathbb{S}^n$. 
\begin{proof}
 Supposons que $m < k$, comme $M$ est biharmonique d'indice $k$ si et seulement si $(\Delta - m + k)\, H = 0$, il suffit de faire une intégration par parties pour montrer que $H=0$. Supposons maintenant que $k<m$ et que $M$ est pseudo--ombilicale, alors d'après la proposition \ref{proposition_biharmonique_indicée}, $M$ est biharmonique d'indice $k$ si et seulement si
 \begin{equation*}
  \Delta^N H + m |H|^2 H - (m-k)\, H = 0.
 \end{equation*}
 D'après la formule de Weitzenböck, on obtient\,:
 \begin{equation*}
  \frac{1}{2} \Delta (|H|^2) = g(\Delta^N H,H) - |\nabla^N H|^2 = - |H|^2 (m|H|^2 - m + k) - |\nabla^N H|^2
 \end{equation*}
 et on conclut encore une fois en intégrant sur $M$.
\end{proof}

\subsubsection{Etude des variations secondes}

Supposons que $M$ est compacte, on se donne $\varphi$ une application biharmonique d'indice $k$ de $\varphi$ de $M$ dans $\mathbb{S}^n$, munie d'une famille à $2$ paramètres $(\varphi_{s,t})_{s,t}$. On pose $V := \frac{\partial \varphi_{s,0}}{\partial s} \big|_{s=0}$ et $W := \frac{\partial \varphi_{0,t}}{\partial t} \big|_{t=0}$, d'après les formules sur les variations secondes des applications harmoniques et des applications biharmoniques (voir  \cite{MR1943720}), on obtient pour $\varphi$\,:
\begin{eqnarray*}
 \frac{\partial^2 \mathcal{E}^2_k(\varphi_{s,t})}{\partial s\, \partial t } \big|_{(s,t)=(0,0)} = \int_M \langle I_k(V),W \rangle \,dvol,
\end{eqnarray*}
avec
\begin{eqnarray*}
 I_k(V)
  &:=& \Delta^2 V + \Delta \big( \tr \langle V, T\varphi \rangle T\varphi - |T\varphi|^2 V \big) - 2 \langle d\Delta\varphi, T\varphi\rangle V + |\Delta\varphi|^2 V \\
  & & + 2 \tr\langle V,d\Delta\varphi\rangle T\varphi + 2 \tr\langle\Delta\varphi,dV\rangle T\varphi - \langle\Delta\varphi,V\rangle \Delta\varphi \\
  & & + \tr\langle\Delta V,T\varphi\rangle T\varphi + \tr\big\langle T\varphi, (\tr\langle V, T\varphi\rangle T\varphi) \big\rangle\, T\varphi \\
  & & - 2|T\varphi|^2 \tr\langle V,T\varphi\rangle T\varphi - 2 \langle dV,T\varphi\rangle \Delta\varphi - |T\varphi|^2 \Delta V + |T\varphi|^4 V \\
  & & + k (\Delta V - |T\varphi|^2 V + \tr\langle V,T\varphi\rangle T\varphi).
\end{eqnarray*}
Supposons que $M$ est proprement biharmonique d'indice $k$ dans $\mathbb{S}^n$, alors on obtient facilement en posant que $\varphi$ est l'inclusion de $M$ et $V:=H$\,:
\begin{equation*}
 \int_M (I_k(H),H) = - 4m^2\, \int_M |H|^4 < 0.
\end{equation*}
On vient de montrer que les variétés proprement biharmoniques d'indice $k$ sont instablement biharmoniques d'indice $k$.

\subsubsection{Un exemple pseudo--ombilical : l'hypersphère $\mathbb{S}^{n-1}(a) \subset \mathbb{S}^n$}

{\bf{Définition et propriétés}}

Soit $a\in]0,1]$, on pose $b:=\sqrt{1-a^2}$, on appelle hypersphère de rayon $a$ de $\mathbb{S}^n$, l'ensemble suivant
\begin{equation*}
 \mathbb{S}^{n-1}(a) := \{ (x,b) \in \mathbb{R}^{n+1} \,|\, x \in \mathbb{R}^n, |x| = a \} \subset \mathbb{S}^n,
\end{equation*}
et on note $H_s$ sa courbure moyenne dans $\mathbb{S}^n$. Soit $p:=(x^1,\ldots,x^n,b)$ un point de l'hypersphère de rayon $a$, alors
\begin{equation*}
 T_p \mathbb{S}(a) = \{ X=(X^1,\ldots,X^n,0) \in \mathbb{R}^{n+1} | x^1 X^1 + \cdots + x^n X^n = 0 \}.
\end{equation*}
On pose $\eta_s := - \frac{b}{a}(x^1,\ldots,x^n,-\frac{a^2}{b}) \in N_p \mathbb{S}(a)$ en remarquant que $|\eta_s|^2 = 1$. On parlera d'équateur pour l'hypersphère de rayon $1$.
\begin{proposition}
 \label{proposition_hypersphère}
 L'hypersphère $\mathbb{S}(a)$ vérifie $A_{\eta_s} = \frac{b}{a} Id$, elle est donc pseudo--ombilicale et $H_s = \frac{b}{a}\, \eta_s$. Sa courbure moyenne est donc parallèle et de norme constante. De plus, elle est harmonique si et seulement si $a=1$.
\end{proposition}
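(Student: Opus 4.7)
Mon approche consiste à exploiter le plongement explicite $\mathbb{S}(a) \hookrightarrow \mathbb{S}^n \hookrightarrow \mathbb{R}^{n+1}$ pour mener tous les calculs dans $\mathbb{R}^{n+1}$ et à en déduire les objets géométriques recherchés par les formules classiques. Je commence par vérifier, au moyen de produits scalaires directs, que $\eta_s$ est bien un champ unitaire normal à $\mathbb{S}(a)$ dans $\mathbb{S}^n$: l'identité $\langle p, \eta_s \rangle = 0$ (qui garantit $\eta_s \in T_p \mathbb{S}^n$) résulte immédiatement de $|x|^2 = a^2$, et pour tout $X \in T_p \mathbb{S}(a)$, la condition $x^1 X^1 + \cdots + x^n X^n = 0$ donne $\langle X, \eta_s \rangle = 0$.

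Le cœur de la démonstration est le calcul de l'opérateur de Weingarten $A_{\eta_s}$. Pour $X \in T_p \mathbb{S}(a)$, je prolonge $\eta_s$ par la même expression affine en les coordonnées ambiantes et je calcule la dérivée euclidienne $D_X \eta_s$: on obtient directement $D_X \eta_s = -\frac{b}{a} X$. La dérivée covariante dans $\mathbb{S}^n$ s'obtient en retranchant la composante selon la normale extérieure $p$ de $\mathbb{S}^n \subset \mathbb{R}^{n+1}$, qui est nulle puisque $\langle X, p \rangle = 0$. Il vient donc $\nabla^{\mathbb{S}^n}_X \eta_s = -\frac{b}{a} X$, qui se trouve être déjà tangent à $\mathbb{S}(a)$. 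Par définition de l'opérateur de Weingarten, on conclut $A_{\eta_s}(X) = \frac{b}{a}\, X$ pour tout $X$; d'où le caractère pseudo-ombilical de $\mathbb{S}(a)$, puis, en prenant la trace sur le fibré tangent de dimension $n-1$, la formule $H_s = \frac{1}{n-1}\tr(A_{\eta_s})\,\eta_s = \frac{b}{a}\,\eta_s$.

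Pour la constance de la norme et le caractère parallèle de $H_s$, j'observe que le fibré normal de l'inclusion $\mathbb{S}(a) \subset \mathbb{S}^n$ est de rang $1$, engendré globalement par la section unitaire $\eta_s$. En dérivant $|\eta_s|^2 = 1$, on voit que la connexion normale $\nabla^\perp \eta_s$ est partout orthogonale à $\eta_s$ dans un fibré de rang $1$, donc identiquement nulle. Combiné à la constance du coefficient $\frac{b}{a}$, ceci montre que $H_s$ est parallèle et de norme constante égale à $\frac{b}{a}$. Enfin, l'inclusion est harmonique si et seulement si $H_s = 0$, ce qui équivaut à $b = 0$ et donc à $a = 1$.

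L'ensemble de la preuve repose sur des calculs élémentaires dans $\mathbb{R}^{n+1}$ et ne présente pas d'obstacle véritable. Le seul point à surveiller est la décomposition orthogonale qui définit $A_{\eta_s}$, à savoir la projection tangentielle de $\nabla^{\mathbb{S}^n}_X \eta_s$ sur $T_p \mathbb{S}(a)$; mais le calcul révèle que cette projection est triviale parce que $\nabla^{\mathbb{S}^n}_X \eta_s$ est déjà tangent à $\mathbb{S}(a)$, ce qui réduit toute la démonstration à une succession d'identifications directes.
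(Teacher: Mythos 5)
Votre démonstration est correcte et suit essentiellement la même voie que celle de l'article\,: calcul de la dérivée euclidienne $\nabla^{\mathbb{R}}_{X}\,\eta_s = -\frac{b}{a}\,X$, identifiée à $\nabla^{\mathbb{S}}_{X}\,\eta_s$, puis décomposition de Weingarten donnant $A_{\eta_s} = \frac{b}{a}\, Id$, $\nabla^N \eta_s = 0$, et donc $H_s = \frac{b}{a}\,\eta_s$ par trace, la caractérisation de l'harmonicité étant immédiate. Vos vérifications supplémentaires (caractère normal et unitaire de $\eta_s$, argument de fibré normal de rang $1$ pour le parallélisme) sont bienvenues mais redondantes, puisque l'annulation de la composante normale de $\nabla^{\mathbb{S}}_{X}\,\eta_s$ découle déjà, comme vous le notez vous-même, du fait que ce vecteur est tangent à $\mathbb{S}(a)$.
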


\begin{proof}
 On obtient d'une part\,:
 \begin{equation*}
  \nabla^{\mathbb{S}}_{\textstyle{X}} \eta_s
     = \nabla^{\mathbb{R}}_{\textstyle{X}} \eta_s = - \frac{b}{a} \nabla^{\mathbb{R}}_{\textstyle{(X^1,\ldots,X^n,0)}} (x^1,\ldots,x^n,-\frac{a^2}{b}) = - \frac{b}{a} X,
 \end{equation*}
 et d'autre part
 \begin{equation*}
  \nabla^{\mathbb{S}}_{\textstyle{X}} \eta_s = \nabla^N_{\textstyle{X}} \eta_s - A_{\eta_s}(X),
 \end{equation*}
  ainsi $\nabla^N \eta_s = 0$ et on détermine alors rapidement les expressions de $A_{\eta_s}$ et $H_s$ annoncées.
\end{proof}

\begin{proposition}
 Supposons que $a\neq1$, alors l'hypersphère $\mathbb{S}(a)$ est proprement biharmonique d'indice $(n-1)(1-\frac{b^2}{a^2})$.
\end{proposition}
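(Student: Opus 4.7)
Le plan est d'appliquer directement la proposition \ref{proposition_biharmonique_indicée}, en exploitant toutes les informations géométriques sur $\mathbb{S}(a)$ établies dans la proposition \ref{proposition_hypersphère}. Ici, la sous--variété est de dimension $m=n-1$, elle est pseudo--ombilicale, sa courbure moyenne $H_s=\frac{b}{a}\eta_s$ est parallèle pour la connexion normale (car $\nabla^N\eta_s=0$) et de norme constante $|H_s|^2=\frac{b^2}{a^2}$.

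Je vérifierais d'abord la seconde équation du système de la proposition \ref{proposition_biharmonique_indicée} : puisque $\nabla^N H_s=0$, les deux termes $d|H_s|^2$ et $\tr A_{\nabla^N_{(\,\cdot\,)}H_s}(\,\cdot\,)$ sont identiquement nuls, donc cette équation est automatiquement satisfaite. Ensuite, pour la première équation, $\Delta^N H_s=0$ puisque $H_s$ est parallèle. Il reste donc à calculer le terme $\tr B(\,\cdot\,,A_{H_s}(\,\cdot\,))$. La pseudo--ombilicalité donne $A_{H_s}=|H_s|^2\,Id_{TM}$, d'où
\begin{equation*}
 \tr B(\,\cdot\,,A_{H_s}(\,\cdot\,)) = |H_s|^2\,\tr B = m\,|H_s|^2\,H_s.
\end{equation*}
La première équation du système devient alors $-(m-k)\,H_s + m\,|H_s|^2\,H_s = 0$, ce qui équivaut, puisque $H_s\neq 0$, à $k=m(1-|H_s|^2) = (n-1)\bigl(1-\tfrac{b^2}{a^2}\bigr)$.

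Pour la propreté, je rappellerais que d'après la proposition \ref{proposition_hypersphère}, l'inclusion de $\mathbb{S}(a)$ est harmonique si et seulement si $a=1$. Sous l'hypothèse $a\neq 1$, on a $b\neq 0$, donc $H_s\neq 0$ et l'inclusion n'est pas harmonique, ce qui conclut que $\mathbb{S}(a)$ est \emph{proprement} biharmonique d'indice $(n-1)\bigl(1-\tfrac{b^2}{a^2}\bigr)$.

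Il n'y a ici pas vraiment d'obstacle ; tout le travail a été fait en amont. Le seul point à ne pas oublier est de bien s'assurer que, puisque $A_{H_s}=|H_s|^2\,Id$, la trace $\tr B(\,\cdot\,,A_{H_s}(\,\cdot\,))$ produit bien $m|H_s|^2 H_s$ (et non $|H_s|^2 H_s$), ce qui est responsable de la dépendance en $m=n-1$ de l'indice obtenu.
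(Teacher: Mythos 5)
Your proof is correct and takes essentially the same route as the paper: the paper's one--line proof invokes the corollaire \ref{corollaire_Delta_X,H_hypersphère_tore} (namely $\Delta H_s = (n-1)\frac{b^2}{a^2}\,H_s$), which encapsulates exactly the computation you redo through the proposition \ref{proposition_biharmonique_indicée}, since both rest on the same appendix formula (proposition \ref{proposition_Delta_X,U,H}) combined with the facts $\nabla^N H_s=0$, $A_{H_s}=|H_s|^2\,Id$ and $|H_s|^2=\frac{b^2}{a^2}$ from the proposition \ref{proposition_hypersphère}. Your properness check ($a\neq1$ gives $b\neq0$, hence $H_s\neq0$ and the inclusion is not harmonic) is the step left implicit in the paper, so everything matches.
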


\begin{proof}
 Application directe du corollaire \ref{corollaire_Delta_X,H_hypersphère_tore}.
\end{proof}

Soit $M$ une sous variété de $\mathbb{S}(a)$, on note $H_M$, $H_s$ et $H$ les courbures moyenne de $M \subset \mathbb{S}(a)$, $\mathbb{S}(a) \subset \mathbb{S}^n$ et $M \subset \mathbb{S}^n$, on obtient alors le lemme suivant\,:
\begin{lemme}
 \label{lemme_technique_hypersphère}
  Supposons que $a\neq1$, alors $M$ ne peut pas être harmonique à la fois dans $\mathbb{S}(a)$ et dans $\mathbb{S}^n$. En revanche, $M$ est harmonique dans l'équateur $\mathbb{S}(1)$ si et seulement si $M$ est harmonique dans $\mathbb{S}^n$. De plus, on a l'égalité suivante quelque soit $a\in]0,1]$\,:
 \begin{equation*}
   \Delta H = (\Delta^s + m\,\frac{b^2}{a^2}) \,H_M + m(|H_M|^2 + \frac{b^2}{a^2} )\, H_s.
 \end{equation*}
\end{lemme}

\begin{proof}
 Remarquons que $H = H_M + H_s$, ce qui montre la première partie du lemme. On note $\Delta$ le laplacien entre $M$ et $\mathbb{S}^n$ et $\Delta^s$ le laplacien de entre $M$ et $\mathbb{S}(a)$, alors $\Delta H_M = \Delta^s H_M + m\, |H_M|^2 H_s$ et $\Delta H_s = m\, |H_s|^2 (H_s + H_M)$ d'après le lemme \ref{lemme_nabla_nabla} et la proposition \ref{proposition_hypersphère}, ce qui suffit à montrer le lemme.
\end{proof}

Du lemme précédent, on obtient alors facilement une généralisation des théorèmes 3.5. et 3.9. de Caddeo, Montaldo et Oniciuc \cite{MR1919374} aux variétés biharmoniques avec indice par le théorème suivant\,:
\begin{théorème}
 \label{théorème_hypersphère_1}
 Supposons que $M$ soit une sous variété de $\mathbb{S}(a)$ avec $a\neq1$, alors $M$ est harmonique dans $\mathbb{S}(a)$ si et seulement si elle est proprement biharmonique d'indice $m(1-\frac{b^2}{a^2})$ dans $\mathbb{S}^n$.
\end{théorème}

\begin{proof}
 Posons $k=m(1-\frac{b^2}{a^2})$, d'après le lemme \ref{lemme_technique_hypersphère} on obtient $(\Delta - m + k) \,H = \Delta^s \,H_M + m\, |H_M|^2 \,H_s$. On conclut en remarquant que $H_s$ est non nulle et que $\Delta^s H_M$ et $H_s$ sont orthogonaux.
\end{proof}

\begin{proposition}
 Supposons que $M$ soit une sous variété de l'équateur, alors $M$ est proprement biharmonique d'indice $k$ dans l'équateur si et seulement si elle est proprement biharmonique d'indice $k$ dans $\mathbb{S}^n$.
\end{proposition}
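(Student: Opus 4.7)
Mon plan est d'utiliser directement le lemme \ref{lemme_technique_hypersphère} dans le cas particulier $a=1$, qui est précisément conçu pour comparer les opérateurs différentiels des deux inclusions $M \subset \mathbb{S}(1)$ et $M \subset \mathbb{S}^n$. L'hypothèse $a=1$ entraîne $b=0$, donc $\frac{b^2}{a^2}=0$ et surtout $H_s = 0$ (l'équateur est totalement géodésique dans $\mathbb{S}^n$, ce qui est cohérent avec la proposition \ref{proposition_hypersphère}). On a alors immédiatement $H = H_M + H_s = H_M$, et la formule du lemme se réduit à $\Delta H = \Delta^s H_M$.

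À partir de là, la première étape est de traduire la biharmonicité d'indice $k$ dans les deux sphères via leur caractérisation commune $(\Delta - m + k)\,H = 0$ (valide car $\mathbb{S}^n$ et $\mathbb{S}^{n-1} = \mathbb{S}(1)$ sont toutes deux à courbure sectionnelle constante égale à $1$, avec $m = \dim M$ inchangé). En combinant avec $H=H_M$ et $\Delta H = \Delta^s H_M$, on obtient
\begin{equation*}
 (\Delta - m + k)\,H = (\Delta^s - m + k)\,H_M,
\end{equation*}
ce qui fournit directement l'équivalence entre biharmonicité d'indice $k$ dans l'équateur et biharmonicité d'indice $k$ dans $\mathbb{S}^n$.

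Il reste alors à traiter l'adjectif \emph{proprement}, c'est-à-dire à s'assurer que la condition de non-harmonicité est préservée. C'est précisément l'objet de la deuxième affirmation du lemme \ref{lemme_technique_hypersphère}, qui dit que dans le cas $a=1$, l'harmonicité dans l'équateur équivaut à l'harmonicité dans $\mathbb{S}^n$. En combinant cette équivalence avec l'équivalence précédente, on conclut.

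Aucune étape ne présente de difficulté réelle : tout est conséquence formelle du lemme technique déjà établi dans le cas $a=1$. L'essentiel du travail mathématique a déjà été fait pour démontrer le lemme (il a fallu comparer les laplaciens bruts le long des inclusions successives) ; ici on ne fait qu'appliquer ces relations dans un cas dégénéré particulièrement simple.
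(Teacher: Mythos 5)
Votre démonstration est correcte et suit exactement la même démarche que celle de l'article : appliquer le lemme \ref{lemme_technique_hypersphère} avec $a=1$ (donc $H_s=0$ et $H=H_M$) pour obtenir $(\Delta-m+k)\,H=(\Delta^s-m+k)\,H_M$, la partie \og proprement \fg{} étant réglée par la deuxième affirmation du même lemme. Vous explicitez simplement ce que l'article laisse implicite (l'annulation de $H_s$, l'applicabilité de la caractérisation $(\Delta-m+k)H=0$ à l'équateur vu comme sphère unité), ce qui ne change rien au fond.
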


\begin{proof}
 On a facilement que $(\Delta-m+k) H = (\Delta^s-m+k) H_M$ d'après le lemme \ref{lemme_technique_hypersphère}.
\end{proof}


Le théorème suivant montre que les hypersphères contrôlent les variétés qui vérifient les hypothèses de courbures de la proposition \ref{proposition_variétés_biharmoniques_indicées_compactes} (sans la compacité).
\begin{théorème}
 \label{théorème_hypersphère_2}
 Soit $M$ une sous variété pseudo--ombilicale de $\mathbb{S}^n$, de courbure moyenne parallèle et de norme constante non nulle, alors\,
 \begin{enumerate}
  \item $M$ est proprement biharmonique d'indice $m(1-|H|^2)$ dans $\mathbb{S}^n$,
  \item $M$ est harmonique dans $\mathbb{S}(\frac{1}{\sqrt{1+|H|^2}})$.
 \end{enumerate}
\end{théorème}

\begin{proof}
 On note $\tilde{\nabla}^N$ la connexion induite par $\nabla^{\mathbb{R}^{n+1}}$ sur le fibré normal de $TM$ dans $\mathbb{R}^{n+1}$, $\nabla^N$ la connexion induite par $\nabla^{\mathbb{S}^n}$ sur le fibré normal de $TM$ dans $\mathbb{S}^n$ et $\tilde{H}$ la courbure moyenne de $M \subset \mathbb{R}^{n+1}$. Soit $p$ un point de $M$, alors $\tilde{H}(p) = H(p) - p$. Soit $X \in TM$, alors on obtient d'une part\,:
 \begin{eqnarray*}
  \nabla^{\mathbb{R}^{n+1}}_{\textstyle{X}} \tilde{H}
   &=& \tilde{\nabla}^N_{\textstyle{X}} \tilde{H} - \tilde{A}_{\tilde{H}}(X),
 \end{eqnarray*}
 et d'autre part\,:
 \begin{equation*}
  \nabla^{\mathbb{R}^{n+1}}_{\textstyle{X}} \tilde{H}
   = \nabla^{\mathbb{S}^n}_{\textstyle{X}} H - g(X,H)\, p - \nabla^{\mathbb{R}^{n+1}}_{\textstyle{X}} p
   = \nabla^N_{\textstyle{X}} H - A_H(X) - X,
 \end{equation*}
 ce qui donne avec les hypothèses, $\tilde{\nabla}^N_{\textstyle{X}} \tilde{H} = \nabla^N_{\textstyle{X}} H = 0$ et $\tilde{A}_{\tilde{H}} = (1 + |H|^2) Id_{TM}$. Soit $\Psi$ une fonction de $M$ dans $\mathbb{R}^{n+1}$ définie par $\Psi(p) = p + \frac{1}{1+|H|^2}\, \tilde{H}(p)$, alors $\Psi$ est constante, il suffit de remarquer que quelque soit $X$ dans $TM$\,:
 \begin{equation*}
  \nabla^{\mathbb{R}^{n+1}}_{\textstyle{X}} \Psi = \nabla^{\mathbb{R}^{n+1}}_{\textstyle{X}} p + \frac{1}{1+|H|^2}\, \nabla^{\mathbb{R}^{n+1}}_{\textstyle{X}} \tilde{H} = X - \frac{1}{1+|H|^2}\, \tilde{A}_{\tilde{H}} (X) = 0.
 \end{equation*}
 De plus, $|p-\Psi|^2 = \frac{1}{1+|H|^2}$ et $|\Psi| = \frac{|H|}{\sqrt{1+|H|^2}}$, ainsi $M \subset \mathbb{S}^n(\Psi,\frac{1}{\sqrt{1+|H|^2}}) \cap \mathbb{S}^n = \mathbb{S}^{n-1}(\frac{1}{\sqrt{1+|H|^2}})$.
\end{proof}


\subsubsection{Un exemple non pseudo--ombilical : le tore généralisé de Clifford $\mathbb{T}^{n-1}(a,b) \subset \mathbb{S}^n$}

Dans tout ce paragraphe, $a$ désigne un réel dans $]0,1[$ et $(n_1,n_2)$ sont deux entiers non nuls vérifiant $n_1+n_2=n-1$, on pose $b:=\sqrt{1-a^2}$. On appelle tore généralisé de Clifford de $\mathbb{S}^n$ de rayons $a$ et $b$, l'ensemble
 \begin{eqnarray*}
  \mathbb{T}^{n_1+n_2}(a,b)
    &:=& \mathbb{S}^{n_1}(a) \times \mathbb{S}^{n_2}(b) \subset \mathbb{S}^n \\
    & =& \{ (x,y) \in \mathbb{R}^{n+1}, x \in \mathbb{R}^{n_1+1}, y \in \mathbb{R}^{n_2+1}, |x| = a, |y| = b \}
 \end{eqnarray*}
 et on note $H_t$ sa courbure moyenne dans $\mathbb{S}^n$. Soit $p:= (x,y)$ un point de $\mathbb{T}(a,b)$, alors
 \begin{equation*}
  T_p \mathbb{T}(a,b) = \{ (X,Y) \in \mathbb{R}^{n+1} \,|\, X \in \mathbb{R}^{n_1+1}, Y \in \mathbb{R}^{n_2+1}, \langle X,x \rangle = \langle Y,y \rangle = 0 \},
 \end{equation*}
  on pose $\eta_t := (-\frac{b}{a} x,\frac{a}{b} y) \in N_p\mathbb{T}(a,b)$ en remarquant que $|\eta_t| = 1$.

\begin{proposition}
 \label{proposition_tore_harmonique}
 Le tore de Clifford $\mathbb{T}(a,b)$ vérifie $A_{\eta_t} = (\frac{b}{a} Id_1,-\frac{a}{b} Id_2)$, il n'est pas pseudo--ombilical et $H_t = \frac{1}{n_1+n_2} (\frac{b}{a} \,n_1 - \frac{a}{b} \,n_2)\, \eta_t$. Sa courbure moyenne est donc parallèle et de norme constante. De plus, il est harmonique si et seulement si $b^2 \,n_1 - a^2 \,n_2 = 0$
\end{proposition}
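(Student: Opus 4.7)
Le plan est de reproduire fidèlement l'architecture de la preuve de la proposition \ref{proposition_hypersphère} sur l'hypersphère, l'idée centrale étant que $\eta_t$ est défini par une expression linéaire en les coordonnées $(x,y)$ de $\mathbb{R}^{n+1}$, que l'on peut dériver trivialement via la connexion plate ambiante. Tout le reste est une réorganisation du résultat selon la décomposition $T\mathbb{R}^{n+1} = T\mathbb{T}(a,b) \oplus N_{\mathbb{S}^n}\mathbb{T}(a,b) \oplus \mathbb{R}\,p$ en chaque point.

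Premièrement, je calculerais $\nabla^{\mathbb{R}^{n+1}}_X \eta_t$ pour $X = (X_1, X_2)$ tangent à $\mathbb{T}(a,b)$ : la linéarité de $\eta_t$ en $(x,y)$ fournit immédiatement un vecteur explicite. Deuxièmement, je vérifierais que ce vecteur est à la fois orthogonal à $p$ et orthogonal à $\eta_t$, grâce aux contraintes $\langle X_1, x \rangle = \langle X_2, y \rangle = 0$ qui définissent le fibré tangent ; c'est le point clé du calcul. Ces deux orthogonalités permettent d'identifier successivement $\nabla^{\mathbb{R}^{n+1}}_X \eta_t$ avec $\nabla^{\mathbb{S}^n}_X \eta_t$, puis avec sa composante purement tangentielle à $\mathbb{T}(a,b)$. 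La décomposition $\nabla^{\mathbb{S}^n}_X \eta_t = \nabla^N_X \eta_t - A_{\eta_t}(X)$ donne alors d'un seul coup $\nabla^N \eta_t = 0$ et l'expression annoncée de $A_{\eta_t}$ comme $(\frac{b}{a}\,Id_1, -\frac{a}{b}\,Id_2)$.

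Une fois $A_{\eta_t}$ en main, le reste est purement formel. Le fibré normal de $\mathbb{T}(a,b)$ dans $\mathbb{S}^n$ étant de rang $1$, la courbure moyenne s'obtient simplement en prenant la trace de $A_{\eta_t}$ et en divisant par $m = n_1 + n_2$, ce qui livre la formule voulue pour $H_t$. Le parallélisme de $H_t$ découle immédiatement de $\nabla^N \eta_t = 0$ et de la constance du coefficient scalaire qui la relie à $\eta_t$, d'où aussi la constance de sa norme. Le critère d'harmonicité se lit en annulant $H_t$ et en multipliant par $ab$. Enfin, pour voir que $\mathbb{T}(a,b)$ n'est pas pseudo--ombilical dans le régime non harmonique, il suffit de noter que $A_{H_t}$ est un multiple scalaire non nul de $A_{\eta_t}$, dont les deux valeurs propres $\frac{b}{a}$ et $-\frac{a}{b}$ sont de signes opposés, donc jamais proportionnel à l'identité.

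L'ensemble étant un calibrage calculatoire du cas de l'hypersphère, il n'y a pas d'obstacle de fond à anticiper ; le seul petit piège serait de confondre la connexion normale dans $\mathbb{S}^n$ avec celle dans $\mathbb{R}^{n+1}$, et c'est précisément pourquoi il est essentiel d'effectuer la vérification d'orthogonalité à $p$ \emph{avant} celle à $\eta_t$, afin de garantir que tout le calcul se déroule intrinsèquement dans $T\mathbb{S}^n$.
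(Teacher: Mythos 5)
Votre proposition est correcte et suit essentiellement la même démarche que la preuve du papier : calculer $\nabla^{\mathbb{R}^{n+1}}_{(X,Y)}\eta_t$ à partir de l'expression linéaire de $\eta_t$, l'identifier à $\nabla^{\mathbb{S}}_{(X,Y)}\eta_t$, puis lire $\nabla^N \eta_t = 0$ et $A_{\eta_t}$ sur la décomposition de Weingarten avant d'en prendre la trace pour $H_t$. Vos vérifications explicites d'orthogonalité à $p$ et à $\eta_t$, ainsi que l'argument des valeurs propres $\frac{b}{a}$ et $-\frac{a}{b}$ de signes opposés pour la non--pseudo--ombilicalité (hors régime harmonique, précision d'ailleurs bienvenue puisque le cas $H_t=0$ est trivialement pseudo--ombilical), ne font qu'expliciter ce que le papier expédie par un \emph{on détermine rapidement les expressions annoncées}.
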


\begin{proof}
 On obtient d'une part\,:
 \begin{equation*}
  \nabla^{\mathbb{S}}_{\textstyle{(X,Y)}} \eta_t = \nabla^{\mathbb{R}}_{\textstyle{(X,Y)}} \eta_t = (-\frac{b}{a} \,X,\frac{a}{b} \,Y),
 \end{equation*}
 et d'autre part
 \begin{equation*}
  \nabla^{\mathbb{S}}_{\textstyle{(X,Y)}} \eta_t = \nabla^N_{\textstyle{(X,Y)}} \eta_t - A_{\eta_t}((X,Y)),
 \end{equation*}
 ainsi $\nabla^N \eta_t = 0$ et on détermine rapidement les expressions de $A_{\eta_t}$ et $H_t$ annoncées.
\end{proof}

\begin{proposition}
 \label{proposition_tore_biharmonique}
 Supposons que $\mathbb{T}(a,b)$ est non--harmonique, alors il est biharmonique d'indice $(1 - \frac{b^2}{a^2})\, n_1 + (1 - \frac{a^2}{b^2})\, n_2$.
\end{proposition}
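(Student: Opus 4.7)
The plan is to invoke Proposition~\ref{proposition_biharmonique_indicée} and exploit the rigid data about $\mathbb{T}(a,b)$ already collected in Proposition~\ref{proposition_tore_harmonique}: the mean curvature decomposes as $H_t = c\,\eta_t$ with constant $c = \tfrac{1}{n_1+n_2}\bigl(\tfrac{b}{a}n_1 - \tfrac{a}{b}n_2\bigr)$, the unit normal $\eta_t$ satisfies $\nabla^N \eta_t = 0$, and the shape operator $A_{\eta_t}$ acts by $\tfrac{b}{a}$ on the first factor and by $-\tfrac{a}{b}$ on the second. In particular $H_t$ is parallel of constant length, so the second (tangential) line of the system in Proposition~\ref{proposition_biharmonique_indicée} holds automatically ($d|H_t|^2 = 0$ and $\nabla^N H_t = 0$), and one is reduced to realising the normal equation
$$\Delta^N H_t - (m-k)\, H_t + \tr B\bigl(\,\cdot\,, A_{H_t}(\,\cdot\,)\bigr) = 0, \qquad m = n_1 + n_2.$$

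First I would observe that $\Delta^N H_t = 0$, since $c$ is a scalar constant and $\eta_t$ is parallel. Next I would compute the curvature term: the normal bundle of $\mathbb{T}(a,b) \subset \mathbb{S}^n$ has rank one with unit section $\eta_t$, so $B(X,Y) = \langle A_{\eta_t}X, Y\rangle\,\eta_t$ and $A_{H_t} = c\, A_{\eta_t}$. Picking an orthonormal frame $(e_i)$ adapted to the product structure, $|A_{\eta_t}e_i|^2$ equals $\tfrac{b^2}{a^2}$ for the $n_1$ directions tangent to $\mathbb{S}^{n_1}(a)$ and $\tfrac{a^2}{b^2}$ for the $n_2$ directions tangent to $\mathbb{S}^{n_2}(b)$, yielding
$$\tr B\bigl(\,\cdot\,, A_{H_t}(\,\cdot\,)\bigr) = c\Bigl(n_1\,\tfrac{b^2}{a^2} + n_2\,\tfrac{a^2}{b^2}\Bigr)\,\eta_t.$$

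The normal equation then reads $(m-k)\, c\, \eta_t = c\bigl(n_1\tfrac{b^2}{a^2} + n_2\tfrac{a^2}{b^2}\bigr)\,\eta_t$; by Proposition~\ref{proposition_tore_harmonique} the non--harmonicity hypothesis is exactly $n_1 b^2 - n_2 a^2 \neq 0$, i.e.\ $c \neq 0$, so one may divide to solve
$$k = (n_1+n_2) - n_1\,\tfrac{b^2}{a^2} - n_2\,\tfrac{a^2}{b^2} = n_1\Bigl(1 - \tfrac{b^2}{a^2}\Bigr) + n_2\Bigl(1 - \tfrac{a^2}{b^2}\Bigr),$$
which is the announced index. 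No step presents a genuine obstacle; the only point deserving care is the splitting of the trace along the product decomposition of $TM$, so that the two eigenvalues of $A_{\eta_t}$ get weighted by the correct dimensions $n_1$ and $n_2$. Just as for the hypersphère case, the whole computation could alternatively be packaged as a direct application of Corollary~\ref{corollaire_Delta_X,H_hypersphère_tore}.
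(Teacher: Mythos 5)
Your proof is correct and in substance identical to the paper's: the paper's proof is a one--line citation of Corollary~\ref{corollaire_Delta_X,H_hypersphère_tore}, i.e.\ $\Delta H_t = \bigl(\tfrac{b^2}{a^2}n_1 + \tfrac{a^2}{b^2}n_2\bigr)H_t$ inserted into $(\Delta - m + k)H = 0$, and your computation via the system of Proposition~\ref{proposition_biharmonique_indicée} (parallelism killing $\Delta^N H$ and the tangential equation, the trace of $B(\,\cdot\,,A_H(\,\cdot\,))$ weighted by the eigenvalue multiplicities $n_1$, $n_2$, and non--harmonicity $c\neq 0$ pinning down $k$) is exactly that corollary unpacked. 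As you observe yourself in your final sentence, the whole argument repackages as that single citation, so there is nothing to add.
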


\begin{proof}
 Application directe du corollaire \ref{corollaire_Delta_X,H_hypersphère_tore}.
\end{proof}

\begin{proposition}
 \label{proposition_borne_tore}
 Si $k > n_1 + n_2 - 2\sqrt{n_1 n_2}$, il n'existe pas de tore de Clifford qui soit proprement biharmonique d'indice $k$.

 Si $k = n_1 + n_2 - 2\sqrt{n_1 n_2}$, alors il existe un unique tore de Clifford qui est proprement biharmonique d'indice $k$.

 Si $k < n_1 + n_2 - 2\sqrt{n_1 n_2}$, alors il existe exactement deux tores de Clifford qui sont proprement biharmoniques d'indice $k$ (si $n_1=n_2$ on a $\mathbb{T}(a,b)$ et $\mathbb{T}(b,a)$).
\end{proposition}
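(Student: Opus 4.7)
Le plan est de ramener l'énoncé à une analyse élémentaire d'une fonction d'une variable. D'après la proposition \ref{proposition_tore_biharmonique}, un tore de Clifford non--harmonique $\mathbb{T}(a,b)$ est biharmonique d'indice $k$ si et seulement si $k = f(a^2)$, où l'on pose
\begin{equation*}
 f(t) := \Big(1 - \frac{1-t}{t}\Big) n_1 + \Big(1 - \frac{t}{1-t}\Big) n_2 = 2(n_1+n_2) - \frac{n_1}{t} - \frac{n_2}{1-t},
\end{equation*}
pour $t := a^2 \in ]0,1[$. Il s'agit donc d'étudier, en fonction de $k$, le nombre de solutions dans $]0,1[$ de l'équation $f(t) = k$, en prenant soin d'écarter la valeur harmonique $t_0 := n_1/(n_1+n_2)$ (caractérisée par $b^2 n_1 - a^2 n_2 = 0$).

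L'étape suivante est l'étude qualitative de $f$. On calcule $f''(t) = -2n_1/t^3 - 2n_2/(1-t)^3 < 0$, donc $f$ est strictement concave sur $]0,1[$ avec $f \to -\infty$ aux deux bords. L'unique point critique vérifie $n_1(1-t)^2 = n_2 t^2$, soit $t_\ast = \sqrt{n_1}/(\sqrt{n_1}+\sqrt{n_2})$, et l'inégalité arithmético--géométrique appliquée à $\tfrac{1-t}{t} n_1 + \tfrac{t}{1-t} n_2 \geq 2\sqrt{n_1 n_2}$ fournit la borne optimale
\begin{equation*}
 k_{\max} := f(t_\ast) = n_1 + n_2 - 2\sqrt{n_1 n_2}.
\end{equation*}

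La stricte concavité et le comportement aux bords donnent alors directement la trichotomie annoncée\,: aucune solution si $k > k_{\max}$, l'unique solution $t_\ast$ si $k = k_{\max}$, et exactement deux solutions $t_1 < t_\ast < t_2$ si $k < k_{\max}$. Dans ce dernier cas, la symétrie $f(t) = f(1-t)$, qui n'a lieu que pour $n_1 = n_2$, identifie $t_1$ et $t_2$ aux tores $\mathbb{T}(a,b)$ et $\mathbb{T}(b,a)$. L'obstacle principal (mineur) consiste alors à vérifier que les solutions obtenues correspondent bien à des tores \emph{proprement} biharmoniques\,: comme $f(t_0) = 0$, cette vérification est automatique dès que $k \neq 0$, et ne demande à être explicitée qu'au niveau $k = 0$, seul endroit où le point harmonique $t_0$ peut figurer parmi les solutions de $f = k$.
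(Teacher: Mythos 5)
Votre démonstration est correcte et emprunte une route réellement différente de celle du texte, à partir de la même réduction par la proposition \ref{proposition_tore_biharmonique}. L'article pose $z := a^2/b^2$ et résout explicitement l'équation du second degré $n_2 z^2 - (n_1+n_2-k)\,z + n_1 = 0$\,: la trichotomie sort du signe du discriminant $(n_1+n_2-k)^2 - 4n_1n_2$, et il faut écarter à la main la branche parasite $k \geq n_1+n_2+2\sqrt{n_1n_2}$, où le discriminant est positif mais où les racines $z_\pm$ sont strictement négatives. Votre étude qualitative de $f(t) = 2(n_1+n_2) - n_1/t - n_2/(1-t)$ (stricte concavité, $f \to -\infty$ aux deux bords, maximum $n_1+n_2-2\sqrt{n_1n_2}$ atteint en $t_\ast$ via l'inégalité arithmético--géométrique) fait apparaître d'emblée la borne comme un vrai maximum de $f$, de sorte que la branche parasite ne se présente jamais et que l'optimalité de la borne est transparente\,; en contrepartie, elle ne fournit pas les formules explicites des racines $z_\pm$, donc des rayons des deux tores, que donne le calcul de l'article. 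Les deux arguments sont élémentaires et de portée équivalente pour $k \neq 0$.

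Le seul point que vous signalez sans le traiter, la propreté au niveau $k=0$, mérite d'être mené au bout, car c'est exactement là que la démonstration de l'article est elle aussi elliptique (elle écrit \og si et seulement si \ldots avec $k\neq0$\fg, excluant de fait cette valeur sans le dire dans l'énoncé). Votre observation $f(t_0)=0$ est juste, et le calcul complet est immédiat\,: pour $k=0$ la quadratique se factorise en $(z-1)(n_2 z - n_1)=0$, donc les deux solutions de $f=0$ sont $z=1$, c'est--à--dire $\mathbb{T}(\frac{1}{\sqrt{2}},\frac{1}{\sqrt{2}})$, et $z=n_1/n_2$, c'est--à--dire le point harmonique $t_0$. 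Ainsi, pour $n_1 \neq n_2$ il n'existe qu'un seul tore proprement biharmonique d'indice $0$, et pour $n_1=n_2$ (où la borne vaut $0$) il n'en existe aucun\,: la trichotomie telle qu'énoncée n'est donc valable que pour $k\neq0$. Votre rédaction, qui isole proprement cette vérification au lieu de la glisser dans un \og avec $k\neq0$\fg, a le mérite de rendre cette restriction visible\,; il vous reste simplement à l'expliciter comme ci--dessus pour que la preuve soit complète.
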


\begin{proof}
 Soit $\mathbb{T}(a,b)$ un tore de Clifford non harmonique, d'après la proposition \ref{proposition_tore_biharmonique}, il est alors proprement biharmonique d'indice $k$ si et seulement si $(\frac{b^2}{a^2} - 1) n_1 + (\frac{a^2}{b^2} - 1) n_2 + k = 0$ avec $k\neq0$. En posant $z := \frac{a^2}{b^2}$, il s'agit alors de résoudre l'équation du second degré $n_2z^2 -(n_1+n_2-k) z + n_1 = 0$. On note $d:=(n_1 + n_2 - k)^2 - 4n_1n_2$ son discriminant, alors $d \geq 0$ si et seulement si $k \leq n_1 + n_2 - 2\sqrt{n_1 n_2}$ ou $k \geq n_1 + n_2 + 2\sqrt{n_1 n_2}$ et dans ces cas là, on obtient les deux racines suivantes\,:
 \begin{equation*}
  z_{\pm} = \frac{n_1+n_2-k \pm \sqrt{(n_1+n_2-k)^2 - 4n_1n_2}}{2n_2}.
 \end{equation*}
 Si $k \geq n_1 + n_2 + 2\sqrt{n_1 n_2}$, alors $z_{\pm} < 0$, ce qui est impossible; il n'existe donc pas de tore de Clifford proprement biharmonique d'indice strictement plus grand que $n_1 + n_2 - 2\sqrt{n_1 n_2}$. Supposons maintenant que $k \leq n_1 + n_2 - 2\sqrt{n_1 n_2}$, alors $z_{\pm} > 0$ et on obtient deux tores de Clifford proprement biharmoniques d'indice $k$ qui sont $\mathbb{T}\big(\sqrt{\frac{1}{1+z_+}} , \sqrt{\frac{z_+}{1+z_+}}\, \big)$ et $\mathbb{T}\big(\sqrt{\frac{1}{1+z_-}} , \sqrt{\frac{z_-}{1+z_-}}\, \big)$. On conclut en remarquant que $\frac{z_+}{1+z_+} = \frac{n_1}{n_1+n_2z_-}$ et que si $k = n_1 + n_2 - 2\sqrt{n_1 n_2}$, alors $z_+ = z_-$.
\end{proof}

Considérons maintenant le cas des sous variétés d'un tore de Clifford, on se donne $M_1$ une sous variété de $\mathbb{S}^{n_1}(a)$ de dimension $m_1$ et $M_2$ une sous variété de $\mathbb{S}^{n_2}(b)$ de dimension $m_2$ avec $0<m_1<n_1$ et $0<m_2<n_2$. On note respectivement $H$, $H_1$ et $H_2$ les courbures moyenne de $M_1\times M_2 \subset \mathbb{S}^n$, $M_1 \subset \mathbb{S}^{n_1}(a)$ et $M_2 \subset \mathbb{S}^{n_2}(b)$.
\begin{lemme}
 \label{lemme_technique_tore}
 Avec les notations précédentes, on a l'égalité suivante\,:
 \begin{eqnarray*}
  m\, \Delta H
    &=& m_1 (\Delta_1^s + \frac{b^2}{a^2} m_1 - m_2)\, H_1 + m_2 (\Delta_2^s + \frac{a^2}{b^2} m_2 - m_1)\, H_2 \\
    & & + \big( (\frac{b}{a}\, m_1 - \frac{a}{b}\, m_2) (\frac{b^2}{a^2}\, m_1 + \frac{a^2}{b^2}\, m_2) + \frac{b}{a}\, m_1 |H_1|^2 - \frac{a}{b}\, m_2 |H_2|^2\big) \eta_t.
 \end{eqnarray*}
\end{lemme}

\begin{proof}
 On a pour la courbure moyenne $H$ \,:
 \begin{equation}
  \label{egalité_H_H_M_tore}
  m\, H = (\frac{b}{a}\, m_1 - \frac{a}{b}\, m_2)\, \eta_t + m_1 H_1 + m_2 H_2.
 \end{equation}
 Soit $\Delta$ le laplacien de $M$ dans $\mathbb{S}^n$, $\Delta_1^s$ le laplacien de $M_1$ dans $\mathbb{S}^{n_1}(a)$ et $\Delta_2^s$ le laplacien de $M_2$ dans $\mathbb{S}^{n_2}(b)$, le lemme \ref{lemme_nabla_nabla} et la proposition \ref{proposition_tore_harmonique} nous donne alors les trois égalités suivantes, qui permettent de conclure la démonstration du lemme\,:
 \begin{eqnarray*}
  \Delta H_1      &=& \Delta_1^s H_1 + \frac{b}{a}\, m_1 |H_1|^2 \eta_t, \\
  \Delta H_2      &=& \Delta_2^s H_2 - \frac{a}{b}\, m_2 |H_2|^2 \eta_t, \\
  \Delta \eta_t   &=& (\frac{b^2}{a^2}\, m_1 + \frac{a^2}{b^2}\, m_2 )\, \eta_t + \frac{b}{a}\, m_1 H_1 - \frac{a}{b}\, m_2 H_2.
 \end{eqnarray*}
\end{proof}

Il découle du lemme précédent, une généralisation des théorèmes 3.11. et 3.13. de Caddeo, Montaldo et Oniciuc \cite{MR1919374} aux variétés biharmoniques avec indice, par le théorème suivant\,:


\begin{théorème}
 \label{théorème_tore_1}
 La variété $M_1 \times M_2$ est proprement biharmonique d'indice $(1-\frac{b^2}{a^2})\, m_1 + (1-\frac{a^2}{b^2})\, m_2$ dans $\mathbb{S}^n$ si et seulement si les trois hypothèses suivantes sont vérifiées\,:
 \begin{enumerate}
  \item $M_1$ est harmonique dans $\mathbb{S}^{n_1}(a)$,
  \item $M_2$ est harmonique dans $\mathbb{S}^{n_2}(b)$,
  \item $b^2 m_1 - a^2 m_2 \neq 0$.
 \end{enumerate}
\end{théorème}

\begin{proof}
  Posons $k = (1-\frac{b^2}{a^2})\, m_1 + (1-\frac{a^2}{b^2})\, m_2$, le lemme \ref{lemme_technique_tore} nous donne l'égalité suivante\,:
 \begin{eqnarray*}
  m\, (\Delta - m + k)\, H
    &=& m_1\, \big(\Delta_1^s - (1 + \frac{a^2}{b^2})\, m_2\big) H_1 + m_2\, \big(\Delta_2^s - (1 + \frac{b^2}{a^2})\, m_1\big) H_2 \\
    & & + (\frac{b}{a}\,m_1 |H_1|^2 -\frac{a}{b}\,m_2 |H_2|^2)\, \eta_t.
 \end{eqnarray*}
 Il suffit ensuite de remarquer que $\Delta_1^s H_1$, $H_1$, $\Delta_2^s H_2$, $H_2$ et $\eta_t$ sont tous orthogonaux deux à deux et on conclut avec l'égalité (\ref{egalité_H_H_M_tore}).
\end{proof}

\subsection{Les sous variétés proprement C--harmoniques de $\mathbb{S}^n$}

\subsubsection{Les applications C--harmoniques}

Sur une surface de Riemann, l'énergie d'une application à valeurs dans une variété riemannienne est une fonctionnelle invariante conforme, ses points critiques sont les applications harmoniques. Il existe un analogue en dimension supérieure, résumé par le théorème suivant\,:

\begin{théorème_vide}[Bérard, \cite{MR2449641}]
 Soit $(M^n,g)$ et $(N,h)$ deux variétés riemanniennes, on suppose que $n$ est pair, alors il existe une fonctionnelle sur les applications de classe $C^{\infty}$ de $(M,g)$ dans $(N,h)$ qui est invariante conforme par rapport à $g$. De plus, l'équation de ses points critiques est une EDP elliptique non--linéaire d'ordre $n$, qui est invariante conforme elle aussi par rapport à $g$.

 Les applications conforme--harmoniques (qu'on abrège en C--harmoniques) sont définis comme les points critiques de cette fonctionnelle.
\end{théorème_vide}
La construction est explicite, nous nous contenterons de donner la définition en petites dimensions (voir \cite{MR2722777} pour plus détails).

\subsubsection{Les variétés proprement C--harmoniques de dimension $4$}

Rappelons la définition d'une application C--harmonique quand la variété de départ est de dimension $4$ (voir théorème 5.1.1 dans \cite{MR2722777})\,:
\begin{définition_vide}
 Soient $(M^4,g)$ et $(N,h)$ deux variétés riemanniennes, une application $\varphi$ de $M$ dans $N$ est dite C--harmonique de $(M,[g])$ dans $(N,h)$ si
 \begin{equation}
  \label{equation_C-harmonique_dim4}
  \delta d\delta T\varphi + \delta (\frac{2}{3}\scal - 2\ric) T\varphi - \Se(\delta T\varphi) = 0,
 \end{equation}
 où $\ric$, $\scal$ et $dvol$ se rapportent à $g$, $\R^h$ est le tenseur de courbure de $(N,h)$ et $(e_1,\ldots,e_4)$ est une base orthonormée de $TM$ par rapport à $g$.
\end{définition_vide}

Dans le cas des sous variétés de $\mathbb{S}^n$, la C--harmonicité donne la proposition suivante\,:
\begin{proposition}
 \label{proposition_compacte_C--harmonique_4}
 Soit $M^4$ une sous variété compacte de $\mathbb{S}^n$,
 \begin{itemize}
  \item si $n\geq6$, alors $M$ est une C--harmonique si et seulement si elle est harmonique.
  \item si $n=5$, $M$ est pseudo--ombilicale et $|H|^2\geq 1/6$, alors elle est proprement C--harmonique si et seulement si $|H|^2=1/6$ et $\nabla^N H=0$.
 \end{itemize}
\end{proposition}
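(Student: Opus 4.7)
Le plan est d'adapter l'argument d'intégration par parties pour les variétés pseudo-ombilicales de la Proposition~\ref{proposition_variétés_biharmoniques_indicées_compactes} à l'équation C-harmonique complète (\ref{equation_C-harmonique_dim4}). Pour $m = 4$ et l'inclusion $i : M \hookrightarrow \mathbb{S}^n$, le bloc $\delta d \delta Ti - \Se(\delta Ti)$ se réduit à $-4(\Delta - 4)H$ exactement comme dans le cas biharmonique, car $\delta Ti = -4H$ et $\Se(H) = mH = 4H$ sur la sphère ronde. Le seul terme nouveau à analyser est la correction de Ricci $\delta[(\tfrac{2}{3}\scal - 2\ric)Ti]$ : en combinant la deuxième identité de Bianchi contractée $\mathrm{div}(\mathrm{Ric}^\sharp) = \tfrac{1}{2}\grad\scal$ avec la décomposition de Gauss $\nabla^{\mathbb{S}^n} = \nabla^M + B$, un court calcul devrait donner
\[
\delta\bigl[(\tfrac{2}{3}\scal - 2\ric) Ti\bigr] = \tfrac{1}{3}\grad\scal - \tfrac{8}{3}\scal\, H + 2\sum_i B(e_i, \mathrm{Ric}^\sharp e_i),
\]
transformant la C-harmonicité de $i$ en une EDP du second ordre sur $H$.

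Je décomposerais ensuite cette EDP en ses parties tangentielle et normale à la manière de la Proposition~\ref{proposition_biharmonique_indicée}, en utilisant l'identité pseudo-ombilicale $A_H = |H|^2 \mathrm{Id}$ (qui transforme $\tr B(\cdot, A_H\cdot)$ en $4|H|^2 H$ et $\tr(A_H \mathrm{Ric}^\sharp)$ en $|H|^2 \scal$) ainsi que la formule de Gauss $\scal = 12 + 12|H|^2 - |B_0|^2$, où $B_0 = B - g \otimes H$ désigne la partie de $B$ orthogonale à $H$ (automatiquement $g$-sans trace). En accouplant la partie normale avec $H$ et en appliquant la formule de Weitzenböck $\tfrac{1}{2}\Delta|H|^2 = \langle \Delta^N H, H\rangle - |\nabla^N H|^2$, l'intégration sur $M$ compact élimine le terme $\Delta|H|^2$ et produit une identité scalaire de la forme
\[
\int_M |\nabla^N H|^2 = \int_M |H|^2 \bigl(\alpha - \beta|H|^2 + \gamma|B_0|^2\bigr)
\]
avec des coefficients rationnels positifs explicites $\alpha, \beta, \gamma$.

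En codimension un ($n = 5$), la pseudo-ombilicalité force la totale ombilicalité et $B_0 \equiv 0$, de sorte que l'identité se réduit à $\int|\nabla^N H|^2 = \int|H|^2(\alpha - \beta|H|^2)$ ; la borne inférieure ponctuelle $|H|^2 \geq 1/6$ rend alors le membre de droite négatif ou nul, ce qui force $\nabla^N H = 0$ et $|H|^2$ constant égal au seuil, la réciproque s'obtenant par substitution directe (et réalisée concrètement par l'hypersphère du Théorème~\ref{théorème_hypersphère_2}). Le vrai obstacle est la partie~1 pour $n \geq 6$ : le terme $\gamma|H|^2|B_0|^2$ contribue positivement et l'argument d'intégration en une étape ne suffit plus à conclure. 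Ma stratégie serait d'extraire une seconde identité intégrale à partir de la composante tangentielle de l'équation --- qui, via le Lemme~\ref{lemme_nabla_nabla}, couple $\grad|H|^2$ à $\grad\scal = 12\grad|H|^2 - \grad|B_0|^2$ --- puis de la combiner avec la première afin d'éliminer $|H|^2|B_0|^2$, forçant soit $H \equiv 0$, soit (via le Théorème~\ref{théorème_hypersphère_2}) la réduction de $M$ à une sous-variété pseudo-ombilicale d'une hypersphère, où l'analyse de codimension un exclut le cas propre. Le point le plus délicat sera la gestion des identités de Codazzi nécessaires pour rendre l'identité tangentielle explicite et pour que sa combinaison avec l'identité normale soit suffisamment fine pour annuler $|H|$.
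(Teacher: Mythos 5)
Votre proposition ne suit pas la démonstration du papier et, telle quelle, comporte un trou réel. La démonstration du papier tient en une ligne~: en dimension $4$, l'équation C--harmonique (\ref{equation_C-harmonique_dim4}) est identifiée à l'équation de biharmonicité avec indice, pour l'indice \emph{constant} $k=\frac{\scal}{6}=\frac{n(n-1)}{6}$ (c'est la remarque faite après la définition des applications biharmoniques avec indice, appliquée ici avec la courbure scalaire de la sphère ambiante), puis on applique directement la proposition \ref{proposition_variétés_biharmoniques_indicées_compactes} avec $m=4$~: pour le premier point, $k>4$ équivaut à $n(n-1)>24$, c'est--à--dire $n\geq6$~; pour $n=5$, $k=\frac{10}{3}<4$, la condition $4(1-\inf_{p}|H_p|^2)\leq\frac{10}{3}$ est exactement $|H|^2\geq\frac16$, et $k=4(1-|H|^2)$ donne $|H|^2=\frac16$. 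Vous, au contraire, conservez les courbures \emph{intrinsèques} de la métrique induite (Bianchi contractée, $\scal=12+12|H|^2-|B_0|^2$, etc.)~: c'est une autre équation que celle que le papier traite effectivement, et vos coefficients $\alpha,\beta,\gamma$ ne sont jamais calculés. De plus, cette lecture intrinsèque ne peut pas aboutir aux constantes de l'énoncé~: sur une hypersphère $\mathbb{S}^4(a)\subset\mathbb{S}^5$, la métrique induite est d'Einstein avec $\scal=12(1+|H|^2)$, de sorte que votre voie conduirait à $2(1+|H|^2)=4(1-|H|^2)$, soit $|H|^2=\frac13$ et non $\frac16$~; le seuil $\frac16$ de l'énoncé provient précisément du choix $k=\frac{n(n-1)}{6}$, c'est--à--dire de l'étape de réduction que votre plan contourne.

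Le trou le plus net concerne le premier point ($n\geq6$), que votre texte ne démontre pas~: vous reconnaissez vous--même que l'intégration en une étape échoue à cause du terme $\gamma|H|^2|B_0|^2$, et la \og seconde identité intégrale \fg{} censée l'éliminer n'est ni écrite ni justifiée. En outre, votre schéma pour ce point s'appuie sur l'identité $A_H=|H|^2\,\mathrm{Id}$ et sur le théorème \ref{théorème_hypersphère_2} (donc sur la pseudo--ombilicalité et la courbure moyenne parallèle), hypothèses qui ne figurent pas dans le premier point de l'énoncé~; dans le cadre du papier, ce point résulte d'une simple intégration par parties sur $(\Delta-4+k)H=0$ avec $k>4$, sans aucune hypothèse sur la seconde forme fondamentale. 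Seule votre réduction du bloc d'ordre $4$, $\delta d\delta Ti-\Se(\delta Ti)=-4(\Delta-4)H$, est correcte et rejoint (\ref{equation_biharmonique_indicée})~; le reste du programme demeure à l'état de stratégie.
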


C'est--à--dire, qu'il n'existe pas de sous variétés compactes de dimension $4$ de $\mathbb{S}^n$ proprement C--harmoniques si $n\geq6$. En revanche les sous variétés pseudo--ombilicales de $\mathbb{S}^5$ qui vérifient $|H| \geq \frac{1}{6}$ sont proprement C--harmoniques si et seulement si $|H|=\frac{1}{6}$.
\begin{proof}
 Il suffit d'appliquer la proposition \ref{proposition_variétés_biharmoniques_indicées_compactes} avec $m=4$ et $k=\frac{\scal}{6}=\frac{n(n-1)}{6}$.
\end{proof}

On obtient alors pour les hypersphères, notre exemple type de sous variétés pseudo--ombilicales de $\mathbb{S}^n$\,:
\begin{corollaire}
 La seule hypersphère de dimension $4$ qui est proprement C--harmonique dans $\mathbb{S}^5$ est $\mathbb{S}^4(\sqrt{\frac{6}{7}})$.
\end{corollaire}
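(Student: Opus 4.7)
Le plan est de réduire la question à une condition arithmétique sur le rayon $a$, en combinant les propriétés géométriques des hypersphères avec la caractérisation de la C--harmonicité via la biharmonicité d'indice.

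D'abord, par la proposition \ref{proposition_hypersphère}, toute hypersphère $\mathbb{S}^4(a) \subset \mathbb{S}^5$ avec $a\in]0,1]$ est pseudo--ombilicale, à courbure moyenne parallèle et vérifie $|H_s|^2 = \frac{b^2}{a^2} = \frac{1-a^2}{a^2}$. Le cas $a=1$ correspond à l'équateur, qui est harmonique dans $\mathbb{S}^5$ et n'est donc pas proprement C--harmonique\,; on se restreint alors à $a\in]0,1[$.

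Ensuite on combine deux faits. D'une part, le résultat sur la biharmonicité des hypersphères (énoncé juste après la proposition \ref{proposition_hypersphère}) assure que $\mathbb{S}^4(a)$ est proprement biharmonique d'indice $4\bigl(1 - \frac{b^2}{a^2}\bigr)$. D'autre part, la preuve de la proposition \ref{proposition_compacte_C--harmonique_4} montre que la C--harmonicité d'une sous variété compacte de dimension $4$ de $\mathbb{S}^5$ équivaut à la biharmonicité d'indice $\frac{\scal}{6} = \frac{10}{3}$. Pour une variété non--harmonique, $H_s$ ne s'annule pas et l'équation $(\Delta - 4 + k)\,H_s = 0$ détermine $k$ de manière unique\,; on en déduit que $\mathbb{S}^4(a)$ est proprement C--harmonique si et seulement si $4\bigl(1 - \frac{b^2}{a^2}\bigr) = \frac{10}{3}$, soit $\frac{b^2}{a^2} = \frac{1}{6}$, ce qui donne $a^2 = \frac{6}{7}$ et fournit l'unique hypersphère cherchée $\mathbb{S}^4(\sqrt{6/7})$.

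Le point le plus délicat est de contourner l'hypothèse $|H|^2 \geq \frac{1}{6}$ présente dans la proposition \ref{proposition_compacte_C--harmonique_4}, qui pour les hypersphères ne couvre a priori que la plage $a^2 \leq \frac{6}{7}$\,; l'argument d'unicité de l'indice de biharmonicité propre pour les sous variétés non--harmoniques traite uniformément toutes les valeurs $a\in]0,1[$ et dispense d'un examen séparé du régime $a^2 > \frac{6}{7}$.
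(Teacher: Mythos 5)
Votre démonstration est correcte et suit essentiellement la même voie que la preuve (très condensée) du papier\,: identifier la C--harmonicité en dimension $4$ à la biharmonicité d'indice $k=\frac{\scal}{6}=\frac{10}{3}$, puis égaler cet indice à l'indice de biharmonicité propre $4(1-\frac{b^2}{a^2})$ de l'hypersphère et résoudre en $a$, ce qui est exactement la formule $a=\frac{1}{\sqrt{2-\frac{k}{4}}}$ du papier. Votre explicitation de l'unicité de l'indice pour une sous variété non harmonique, qui permet de contourner l'hypothèse $|H|^2\geq\frac{1}{6}$ de la proposition \ref{proposition_compacte_C--harmonique_4} et de traiter uniformément tout $a\in\left]0,1\right[$, ne fait que rendre rigoureux ce que la preuve du papier laisse implicite.
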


\begin{proof}
 D'après la proposition \ref{proposition_hypersphère}, $k=\frac{10}{3}$ et donc $a = \frac{1}{\sqrt{2-\frac{k}{4}}} = \sqrt{\frac{6}{7}}$.
\end{proof}

En ce qui concerne les variétés qui ne sont pas pseudo--ombilicales, on a le résultat de rigidité suivant pour les tores de Clifford\,:
\begin{proposition}
 \label{proposition_tore_dim4}
 Il n'existe pas de tore de Clifford de dimension $4$ qui soit proprement C--harmonique dans $\mathbb{S}^5$.
\end{proposition}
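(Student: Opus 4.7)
Mon plan est d'utiliser la même réduction ``C-harmonicité $\Leftrightarrow$ biharmonicité d'indice $\frac{n(n-1)}{6}$'' qui sous-tend la démonstration de la proposition \ref{proposition_compacte_C--harmonique_4}, puis d'appliquer la borne universelle de la proposition \ref{proposition_borne_tore}. Pour $n=5$, cet indice vaut $k = \frac{n(n-1)}{6} = \frac{10}{3}$, et il ne reste qu'à vérifier qu'aucun tore de Clifford de dimension $4$ dans $\mathbb{S}^5$ ne peut être proprement biharmonique d'un indice aussi grand.

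Les tores de Clifford de dimension $4$ dans $\mathbb{S}^5$ sont les $\mathbb{T}^{n_1+n_2}(a,b)$ avec $n_1+n_2=4$, soit $(n_1,n_2) \in \{(1,3),(2,2),(3,1)\}$. D'après la proposition \ref{proposition_borne_tore}, un tel tore ne peut être proprement biharmonique d'indice $k$ que si $k \leq n_1+n_2-2\sqrt{n_1 n_2}$. Pour $(n_1,n_2) \in \{(1,3),(3,1)\}$, cette borne vaut $4-2\sqrt{3}$\,; pour $(n_1,n_2)=(2,2)$, elle vaut $0$. Dans tous les cas elle est strictement inférieure à $\frac{10}{3}$, l'inégalité $4-2\sqrt{3}<\frac{10}{3}$ équivalant à $\frac{2}{3}<2\sqrt{3}$, qui est évidente. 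On en déduit qu'aucun tore de Clifford de dimension $4$ n'est proprement biharmonique d'indice $\frac{10}{3}$ dans $\mathbb{S}^5$, et donc aucun n'est proprement C-harmonique.

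Le seul point à justifier avec un peu de soin est la validité de la réduction initiale pour un tore de Clifford, qui n'est pas pseudo-ombilical (proposition \ref{proposition_tore_harmonique}) et ne relève donc pas directement du cadre explicite de la proposition \ref{proposition_compacte_C--harmonique_4}. Cependant, cette identification découle de la forme explicite de l'équation \eqref{equation_C-harmonique_dim4_intro} appliquée à une inclusion dans $\mathbb{S}^n$\,: le terme $\delta ((\frac{2}{3}\scal - 2\ric) T\varphi)$ se réécrit comme un multiple scalaire de $\delta T\varphi$ par la même manipulation que celle utilisée dans la preuve de la proposition \ref{proposition_compacte_C--harmonique_4}, et l'hypothèse de pseudo-ombilicalité n'intervient pas à ce stade. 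La conjonction de cette réduction et de la borne de la proposition \ref{proposition_borne_tore} fournit la conclusion sans aucun calcul supplémentaire.
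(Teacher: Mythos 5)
Your proof is correct and is essentially the paper's own argument: the paper's one-line proof simply remarks that $k=\tfrac{10}{3}>(1-\sqrt{3})^2=4-2\sqrt{3}$, which is the largest possible value of the bound $n_1+n_2-2\sqrt{n_1 n_2}$ over the couples $(n_1,n_2)$ with $n_1+n_2=4$, and then invokes la proposition \ref{proposition_tore_biharmonique}. Your explicit check of the three cases $(1,3)$, $(2,2)$, $(3,1)$ and your remark that the reduction to proper biharmonicity of index $\tfrac{n(n-1)}{6}=\tfrac{10}{3}$ does not require pseudo-umbilicity merely make explicit what the paper leaves implicit.
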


\begin{proof}
 Il suffit de remarquer que $k=\frac{10}{3}>(1-\sqrt{3})^2$ et d'appliquer la proposition \ref{proposition_tore_biharmonique}.
\end{proof}

\subsubsection{Les variétés proprement C--harmoniques de dimension $6$}

Rappelons la définition d'une application C--harmonique quand la variété de départ est de dimension $6$ (voir théorème 5.2.1 dans \cite{MR2722777})\,:
\begin{définition_vide}
 Soient $(M^6,g)$ une variété d'Einstein et $(N,h)$ une variété riemannienne symétrique, une application $\varphi$ qui est $C^{\infty}$ de $M$ dans $N$ est dite C--harmonique de $(M,[g])$ dans $(N,h)$ si elle vérifie l'équation suivante\,:
 \begin{equation} 
  \label{equation_C-harmonique_dim6}
  (\delta d - \Se + \frac{2\scal}{15})(\delta d - \Se + \frac{\scal}{5})\,\delta T\varphi - 2 \tr \R^h_{\textstyle{\delta T\varphi,T\varphi}} (\nabla^h_{\!\textstyle{T\varphi}} \delta T\varphi) = 0
 \end{equation}
 où $\nabla^h$ est la connexion de $h$ sur $\varphi^*TN$, $\scal$ est la courbure scalaire de $(M,g)$, $R^h$ est le tenseur de courbure de $(N,h)$ et la trace est prise par rapport à $g$.
\end{définition_vide}

Dans le cas des sous variétés de dimension $6$ de $\mathbb{S}^n$, la C--harmonicité donne la proposition suivante\,:
\begin{proposition}
 Soit $M^6$ une sous variété pseudo--ombilicale de $\mathbb{S}^n$, alors $M$ est C-harmonique si et seulement si\,:
 \begin{eqnarray}
  \label{equation_sous_variété_C-harmonique_dim6}
  \lefteqn{\Delta^2 H + \frac{1}{3}\, (n^2-n-36) \Delta H - 72\, |H|^2\, H} \notag\\
   & & + \frac{2}{75}\, (n^2-n-45)(n^2-n-30)\, H - 6\, d|H|^2 = 0.
 \end{eqnarray}
\end{proposition}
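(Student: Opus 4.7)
The plan is to substitute the inclusion $\varphi = i : M \hookrightarrow \mathbb{S}^n$ into the C--harmonic equation (\ref{equation_C-harmonique_dim6}) and to exploit the pseudo--ombilical hypothesis $A_H = |H|^2 Id_{TM}$. First I would note that $\delta Ti = -mH = -6H$ and that, since $\mathbb{S}^n$ has constant sectional curvature, $R^h_{X,Y}Z = \langle Y,Z\rangle X - \langle X,Z\rangle Y$, hence $\Se(X) = mX - X^{\top}$; in particular $\Se(H) = 6H$. Writing $L_c := \delta d - \Se + c\,Id$, a formal expansion yields
\begin{equation*}
 L_{2\scal/15}\, L_{\scal/5}(H) = \Delta^2 H + \Big(\frac{\scal}{3}-6\Big)\Delta H - \Se(\Delta H) + \frac{2}{75}(\scal-30)(\scal-45)\,H,
\end{equation*}
which, once $\scal = n(n-1)$ is substituted, already produces the expected coefficients in front of $\Delta H$ and of $H$ in (\ref{equation_sous_variété_C-harmonique_dim6}).

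The central step is to compute $\Se(\Delta H)$. Here I would invoke the decomposition of $\Delta H$ used in the proof of Proposition \ref{proposition_biharmonique_indicée}: its normal part equals $\Delta^N H + \tr B(\cdot, A_H(\cdot))$ and its tangent part equals $\frac{m}{2}d|H|^2 + 2\tr A_{\nabla^N_\cdot H}(\cdot)$. Pseudo--ombilicalité reduces the former to $\Delta^N H + m|H|^2 H$. For the latter, combining the Codazzi equation (available because the ambient has constant sectional curvature) with the identity $(\nabla_X A_H) = X(|H|^2)\,Id$ gives $\tr A_{\nabla^N_\cdot H}(\cdot) = -\frac{m-2}{2}\,d|H|^2$, so the tangent part of $\Delta H$ collapses to a multiple of $d|H|^2$, namely $-d|H|^2$ for $m=6$. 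Applying $\Se$, which multiplies normal vectors by $m$ and tangent vectors by $m-1$, then expresses $\Se(\Delta H)$ as a linear combination of $\Delta H$ and $d|H|^2$ alone.

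The correction term $-2\tr R^h_{\delta T\varphi, T\varphi}(\nabla^h_{T\varphi}\delta T\varphi)$ is handled in parallel: using $\nabla_{e_i} H = \nabla^N_{e_i} H - A_H(e_i)$ and the spherical curvature tensor, one finds $\sum_i R^h_{H,e_i}(\nabla_{e_i} H) = -m|H|^2 H - \frac{1}{2}d|H|^2$, hence a global contribution of the form $c_1\,|H|^2 H + c_2\,d|H|^2$ with $c_1 = 432$ and $c_2 = 36$.

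Putting the two pieces together, the original equation $L_{2\scal/15}L_{\scal/5}(-6H) + c_1\,|H|^2 H + c_2\,d|H|^2 = 0$ is divided by $-m = -6$ to pass from $\delta Ti$ to $H$, and the value $\scal = n(n-1)$ (inherited from the Einstein hypothesis required in the definition of C--harmonicité in dimension $6$, combined with the ambient geometry) is inserted. Regrouping the tangent and normal contributions then produces (\ref{equation_sous_variété_C-harmonique_dim6}). The main technical hurdle is the Codazzi identity under pseudo--ombilicalité that reduces $\tr A_{\nabla^N_\cdot H}(\cdot)$ to a multiple of $d|H|^2$, together with the careful tangent/normal bookkeeping required when iterating $L_c$ twice and combining with the quadratic curvature correction.
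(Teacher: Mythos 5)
Your overall strategy is the same as the paper's: substitute the inclusion with $\delta Ti=-6H$ into (\ref{equation_C-harmonique_dim6}), expand the composed operator, set $\scal=n(n-1)$, and evaluate the curvature correction $-2\tr \R^h_{\delta Ti,Ti}(\nabla_{Ti}\,\delta Ti)=432\,|H|^2H+36\,d|H|^2$, which after division by $-6$ is exactly the paper's $12\tr \R_{H,\,\cdot\,}(\nabla_{(\,\cdot\,)}H)=-72\,|H|^2H-6\,d|H|^2$. Each of these individual computations is correct. The problem is the final assembly: it does not produce (\ref{equation_sous_variété_C-harmonique_dim6}) from your own intermediate claims. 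Your \og central step\fg\ gives $\Se(\Delta H)=6\,\Delta H-(\Delta H)^{\top}$, and by the proposition \ref{proposition_Delta_X,U,H} and the lemme \ref{lemme_nabla_nabla} the pseudo--umbilical case $m=6$ gives $(\Delta H)^{\top}=-d|H|^2$; hence $-\Se(\Delta H)$ contributes $-6\,\Delta H-d|H|^2$. Added to the $-6\,d|H|^2$ coming from the curvature correction, your pieces yield a coefficient $-7\,d|H|^2$ in the final identity, not the $-6\,d|H|^2$ of the statement. So either you must justify discarding the tangential part of $\Se(\Delta H)$ --- and it does not vanish unless $|H|$ is constant --- or your bookkeeping contradicts the equation you claim to obtain; the sentence \og regrouping the tangent and normal contributions then produces (\ref{equation_sous_variété_C-harmonique_dim6})\fg\ hides an unresolved discrepancy of exactly one unit in this coefficient. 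As written, the last step is a non sequitur.

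For comparison, the paper's proof never introduces $\Se(\Delta H)$ at all: it rewrites (\ref{equation_C-harmonique_dim6}) directly as $\big(\Delta+\frac{2}{15}(n^2-n-45)\big)\big(\Delta+\frac{1}{5}(n^2-n-30)\big)H+12\tr \R_{H,\,\cdot\,}(\nabla_{(\,\cdot\,)}H)=0$, i.e.\ it treats $\delta d-\Se+c$ as the scalar operator $\Delta+(c-6)$ throughout --- exact on normal sections, but silently ignoring that $\Se$ multiplies tangent vectors by $5$ rather than $6$, and the section $\Delta H+(c-6)H$ to which the outer factor is applied has a nonzero tangential part. Your refinement is thus \emph{more} careful than the paper on precisely this point, but then you cannot simultaneously claim the paper's constant: carried out consistently, your route gives $-7$ where the proposition states $-6$, so you need either to exhibit an error in the identification $(\Delta H)^{\top}=-d|H|^2$ (there is none, given the lemme \ref{lemme_nabla_nabla}) or to flag the mismatch with the stated proposition rather than asserting agreement. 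A secondary remark: like the paper, you substitute $\scal=n(n-1)$, but your justification (\og inherited from the Einstein hypothesis combined with the ambient geometry\fg) is loose, since the scalar curvature of the induced metric on $M^6$ is not $n(n-1)$ in general; this follows the paper's own convention, so it is not a gap relative to the paper, but it deserves a more explicit caveat.
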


\begin{proof}
  D'après (\ref{equation_C-harmonique_dim6}), $M$ est C--harmonique si et seulement si\,:
 \begin{equation*}
  \big(\Delta + \frac{2}{15}\, (n^2-n-45)\big) \big(\Delta + \frac{1}{5}\, (n^2-n-30)\big)\,H + 12 \tr \R_{\textstyle{H,\,\cdot\,}} (\nabla_{\textstyle{(\,\cdot\,)}} H) = 0,
 \end{equation*}
 il suffit alors de remarquer que le dernier terme est égale à $- 72\, |H|^2\, H - 6\, d|H|^2$.
\end{proof}

\begin{proposition}
 \label{proposition_compacte_C--harmonique_6}
 Soit $M^6$ une sous variété compacte pseudo--ombilicale de $\mathbb{S}^n$,
 \begin{itemize}
  \item si $n\geq 8$, alors $M$ est C--harmonique si et seulement si elle est harmonique.
  \item si $n=7$ et $|H|^2 \geq \frac{1}{30} (25 + \sqrt{649})$, alors $M$ est proprement C--harmonique si et seulement si $|H|^2 = \frac{5}{6} + \frac{\sqrt{649}}{30}$ et $\nabla^N H=0$.
 \end{itemize}
\end{proposition}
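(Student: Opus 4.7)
La stratégie est de poursuivre la démarche de la proposition \ref{proposition_compacte_C--harmonique_4}, mais sans pouvoir invoquer directement la proposition \ref{proposition_variétés_biharmoniques_indicées_compactes} puisque l'équation de la C--harmonicité en dimension 6 est d'ordre 4 en $H$. Je l'adapterais en combinant intégration par parties, inégalité de Cauchy--Schwarz et une minoration issue de la pseudo--ombilicité.

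Premièrement, je scinderais l'équation (\ref{equation_sous_variété_C-harmonique_dim6}) en ses composantes normale et tangentielle. Puisque $d|H|^2$ est le seul terme tangentiel, on obtient $d|H|^2 = 0$, donc $|H|$ est constante sur $M$, et l'équation normale s'écrit
\begin{equation*}
 \Delta^2 H + \alpha\, \Delta H + (\beta_0 - 72|H|^2)\, H = 0,
\end{equation*}
où $\alpha = \frac{n^2-n-36}{3}$ et $\beta_0 = \frac{2}{75}(n^2-n-45)(n^2-n-30)$ sont des constantes.

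Ensuite, je prendrais le produit scalaire de cette équation avec $H$ et intégrerais sur $M$ : par auto--adjonction de $\Delta$, ceci fournit $\int_M |\Delta H|^2 + \alpha \int_M |\nabla H|^2 + (\beta_0 - 72|H|^2)|H|^2 \mathrm{vol}(M) = 0$. L'inégalité de Cauchy--Schwarz $\big(\int_M \langle \Delta H, H\rangle\big)^2 \leq \int_M |\Delta H|^2 \cdot \int_M |H|^2$, combinée à l'identité $\int_M \langle \Delta H, H\rangle = \int_M |\nabla H|^2$ (Weitzenböck avec $|H|$ constante), entraîne $\int_M |\nabla H|^2 \leq \mu_+ |H|^2 \mathrm{vol}(M)$, où $\mu_+$ est la plus grande racine du trinôme $\mu^2 + \alpha\, \mu + \beta_0 - 72|H|^2$. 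D'autre part, la pseudo--ombilicité donne $\nabla_X H = \nabla^N_X H - |H|^2 X$ ; cette décomposition orthogonale conduit à la minoration ponctuelle $|\nabla H|^2 \geq 6|H|^4$. Par combinaison, on obtient $6|H|^2 \leq \mu_+$, puis après élévation au carré, la contrainte polynomiale
\begin{equation*}
 36|H|^4 + (6\alpha - 72)|H|^2 + \beta_0 \leq 0.
\end{equation*}

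Pour conclure, j'analyserais cette inégalité polynomiale selon $n$. Pour $n \geq 8$, le membre de gauche est strictement positif pour tout $|H|^2 > 0$ (discriminant négatif pour $n=8$, puis coefficients positifs pour $n\geq 9$ puisque $6\alpha - 72 = 2(n^2-n-72) \geq 0$ et $\beta_0 > 0$), ce qui force $H = 0$ et la conclusion harmonique. Pour $n = 7$, les racines du trinôme sont $\frac{25 \pm \sqrt{649}}{30}$ et l'inégalité impose $|H|^2 \leq \frac{25 + \sqrt{649}}{30}$ ; jointe à l'hypothèse de la proposition, elle force l'égalité $|H|^2 = \frac{5}{6} + \frac{\sqrt{649}}{30}$, et le cas d'égalité dans $|\nabla H|^2 \geq 6|H|^4$ impose $\nabla^N H = 0$. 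La réciproque se vérifie par substitution directe : sous $\nabla^N H = 0$ et la pseudo--ombilicité, un calcul donne $\Delta H = 6|H|^2 H$ puis $\Delta^2 H = 36|H|^4 H$, et la partie normale se ramène à l'annulation du trinôme, effective précisément à cette valeur de $|H|^2$. Le passage le plus délicat est l'articulation entre la majoration de $\int_M |\nabla H|^2$ obtenue par Cauchy--Schwarz et la minoration ponctuelle $|\nabla H|^2 \geq 6|H|^4$ issue de la pseudo--ombilicité, dont la conjonction doit produire une contrainte polynomiale suffisamment rigide pour distinguer proprement les cas $n = 7$ et $n \geq 8$.
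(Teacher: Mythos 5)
Votre stratégie globale — tester l'équation (\ref{equation_sous_variété_C-harmonique_dim6}) contre $H$, intégrer, en tirer la contrainte polynomiale $36|H|^4+2(n^2-n-72)|H|^2+\beta_0\le0$ puis discuter selon $n$ — aboutit exactement au trinôme qu'utilise le papier. Mais votre première étape contient une erreur réelle : vous affirmez que $d|H|^2$ est \og le seul terme tangentiel \fg\ de l'équation, d'où $d|H|^2=0$ et $|H|$ constante d'emblée. C'est faux, car le laplacien $\Delta$ du fibré $i^*T\mathbb{S}^n$ ne préserve pas la décomposition $TM\oplus NM$ : la proposition \ref{proposition_Delta_X,U,H} montre qu'en dimension $6$ pseudo--ombilicale $\Delta H=\Delta^N H+6|H|^2H-d|H|^2$, donc $\Delta H$ a déjà une composante tangentielle, et $\Delta^2H$ en produit d'autres encore (en itérant le lemme \ref{lemme_nabla_nabla}). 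La projection tangentielle de l'équation n'est donc pas $-6\,d|H|^2=0$, et la constance de $|H|$ ne peut pas être un point de départ ; dans le papier elle n'est obtenue qu'en fin de parcours, comme conséquence de $\nabla^N H=0$.

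La suite de votre argument se répare toutefois, car elle n'utilise la constance que pour sortir $|H|^2$ des intégrales. L'identité intégrale de votre deuxième étape est valable sans constance ($\langle d|H|^2,H\rangle=0$ ponctuellement et $\int_M\langle\Delta H,H\rangle=\int_M|\nabla H|^2$ par simple intégration par parties) ; en posant $s:=\int_M|H|^4\big/\int_M|H|^2$ lorsque $H\not\equiv0$, Cauchy--Schwarz et la minoration ponctuelle $|\nabla H|^2\ge6|H|^4$ donnent $\mu:=\int_M|\nabla H|^2\big/\int_M|H|^2\ge6s$ et $\mu^2+\alpha\mu+\beta_0-72s\le0$, d'où $36s^2+2(n^2-n-72)s+\beta_0\le0$ puisque le trinôme en $\mu$ est croissant sur $[0,+\infty)$ — c'est d'ailleurs cet argument de monotonie qu'il faut invoquer à la place de votre \og élévation au carré \fg, laquelle, telle quelle, exigerait aussi de vérifier $6|H|^2\ge\mu_-$. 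Pour $n\ge8$ cela force $H=0$ ; pour $n=7$, l'hypothèse ponctuelle $|H|^2\ge\frac{25+\sqrt{649}}{30}$ donne $s\ge\frac{25+\sqrt{649}}{30}$, d'où l'égalité, puis $\nabla^N H=0$ et enfin $|H|$ constante égale à la racine, la réciproque par substitution étant correcte. Ainsi réparée, votre voie par Cauchy--Schwarz est une alternative légitime à celle du papier, qui calcule au contraire $\int_M|\Delta H|^2$ exactement (proposition \ref{proposition_Delta_X,U,H} et formule de Weitzenböck) pour tout écrire comme somme de termes positifs ou nuls plus $\int_M|H|^2\big(36|H|^4+2(n^2-n-72)|H|^2+\beta_0\big)$ : même trinôme final, mais sans jamais supposer $|H|$ constante, et avec en prime le contrôle explicite du terme $7\,|d|H|^2|^2$.
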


C'est--à--dire, que pour $n\geq8$, il n'existe pas de sous variétés compactes de dimension $6$ proprement C--harmonique dans $\mathbb{S}^n$ ayant une courbure moyenne \og petite \fg.
\begin{proof}
 Supposons que $M$ est C--harmonique, alors en prenant le produit scalaire du terme de gauche de l'égalité (\ref{equation_sous_variété_C-harmonique_dim6}) contre $H$ qu'on intègre sur $M$, on obtient
 \begin{eqnarray}
  \label{egalité_C-harmonique_dim6_compacte}
  0 &=& \int_M \big(|\Delta H|^2  + \frac{1}{3}\, (n^2-n-36) (|\nabla ^N H|^2 + |A_H|^2)\big) \,dvol \notag\\
    & & - \int_M \big(72\, |H|^4 - \frac{2}{75}\, (n^2-n-45)(n^2-n-30)\, |H|^2\big) \,dvol.
 \end{eqnarray}
 Comme $M$ est pseudo--ombilicale, la proposition (\ref{proposition_Delta_X,U,H}) et la formule de Weitzenböck nous donne\,:
 \begin{equation*}
  \int_M |\Delta H|^2 \,dvol = \int_M \big(|\Delta^N H|^2 + 7\, |d|H|^2|^2 + 12\, |H|^2 |\nabla^N H|^2 + 36\, |H|^6 \,dvol,
 \end{equation*}
 ce qui donne pour le terme de droite de (\ref{egalité_C-harmonique_dim6_compacte})\,:
 \begin{equation*}
   \int_M \Big(|\Delta^N H|^2 + 7\, |d|H|^2|^2 + \big(12\, |H|^2 + \frac{1}{3}\, (n^2-n-36)\big) |\nabla^N H|^2\Big) \,dvol
 \end{equation*}
 \begin{equation*}
   + \int_M |H|^2 \big(36\, |H|^4 + 2(n^2-n-72)\, |H|^2 + \frac{2}{75}\, (n^2-n-45)(n^2-n-30)\big) \,dvol.
 \end{equation*}
 Remarquons que la première intégrale est positive ou nulle, il suffit d'étudier le signe de la deuxième. Si $n\geq9$, tous ses coefficients sont strictement positifs, ainsi $H=0$. Pour $n=8$, la deuxième intérale est égale à
 \begin{equation*}
  \int_M |H|^2 \big( 36\, |H|^4 - 32\, |H|^2 + \frac{572}{75} \big) \,dvol
 \end{equation*}
 et comme $36\, |H|^4 - 32\, |H|^2 + \frac{572}{75} > 0$, alors $H=0$. Quand $n=7$, on obtient alors pour la deuxième intégrale\,:
 \begin{equation*}
  \int_M |H|^2 \big( 36\, |H|^4 - 60\, |H|^2 - \frac{24}{25} \big) \,dvol,
 \end{equation*}
 il suffit de remarquer que la plus grande racine du trinôme $36\, x^2 - 60\, x - \frac{24}{25}$ est $\frac{1}{30} (25 + \sqrt{649})$ pour conclure.
\end{proof}

On obtient alors pour les hypersphères, notre exemple type de sous variétés pseudo--ombilicales de $\mathbb{S}^n$\,:
\begin{corollaire}
 La seule hypersphère de dimension $6$ qui soit proprement C--harmonique dans $\mathbb{S}^7$ est $\mathbb{S}^6(\frac{1}{6} \sqrt{25-5\sqrt{\frac{59}{11}}})$.
\end{corollaire}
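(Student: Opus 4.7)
Le plan consiste à combiner la proposition \ref{proposition_hypersphère}, selon laquelle $\mathbb{S}^6(a) \subset \mathbb{S}^7$ est pseudo--ombilicale à courbure moyenne parallèle et de norme constante $|H| = b/a$ (avec $b = \sqrt{1-a^2}$), avec l'analyse déjà menée dans la démonstration de la proposition \ref{proposition_compacte_C--harmonique_6}. Puisque $\nabla^N H = 0$ et $|H|$ est constant pour une hypersphère, il suffira de transcrire l'équation (\ref{equation_sous_variété_C-harmonique_dim6}) de la C--harmonicité en une équation purement scalaire portant sur $|H|^2$.

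Première étape\,: on calculera $\Delta H$ et $\Delta^2 H$. Comme $d|H|^2 = 0$, $\nabla^N H = 0$ et $A_H = |H|^2 Id$, la proposition \ref{proposition_Delta_X,U,H} donne $\Delta H = m|H|^2 H = 6|H|^2 H$, d'où $\Delta^2 H = 36|H|^4 H$. En reportant dans l'équation (\ref{equation_sous_variété_C-harmonique_dim6}) avec $n=7$ (soit $n^2 - n = 42$), tout se réduit à l'équation scalaire
\[
 H \cdot \left( 36\,|H|^4 - 60\,|H|^2 - \tfrac{24}{25} \right) = 0,
\]
qui est précisément le trinôme en $|H|^2$ rencontré à la fin de la démonstration de la proposition \ref{proposition_compacte_C--harmonique_6}.

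Seconde étape\,: la propre C--harmonicité imposant $H \neq 0$, l'unique racine strictement positive de ce trinôme donne $|H|^2 = \frac{25 + \sqrt{649}}{30}$, ce qui fournit à la fois l'existence et l'unicité. Il ne reste plus qu'à résoudre $\frac{1-a^2}{a^2} = \frac{25+\sqrt{649}}{30}$ en $a$\,: on obtient $a^2 = \frac{30}{55 + \sqrt{649}}$, puis après rationalisation et en utilisant la factorisation $649 = 11 \cdot 59$ pour écrire $\sqrt{649}/11 = \sqrt{59/11}$, on retrouve $a = \frac{1}{6}\sqrt{25 - 5\sqrt{59/11}}$. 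L'unique ``obstacle'' est donc cette petite simplification algébrique du radical (ainsi que la vérification triviale $25 > 5\sqrt{59/11}$)\,; tout le travail conceptuel est déjà contenu dans la proposition \ref{proposition_compacte_C--harmonique_6}.
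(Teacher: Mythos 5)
Votre démonstration est correcte et suit essentiellement la même voie que celle de l'article\,: réduction de l'équation (\ref{equation_sous_variété_C-harmonique_dim6}) avec $n=7$ au trinôme $36\,|H|^4 - 60\,|H|^2 - \frac{24}{25}$, dont l'unique racine positive $\frac{1}{30}(25+\sqrt{649})$ détermine $a$. La seule différence, purement cosmétique, est que vous recalculez $\Delta H = 6|H|^2 H$ via la proposition \ref{proposition_Delta_X,U,H}, là où l'article cite le corollaire \ref{corollaire_Delta_X,H_hypersphère_tore} (qui donne $\Delta H_s = 6\frac{b^2}{a^2}H_s$, identique puisque $|H|^2 = \frac{b^2}{a^2}$).
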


\begin{proof}
 D'après la proposition \ref{proposition_hypersphère} et le corollaire \ref{corollaire_Delta_X,H_hypersphère_tore}, l'hypersphère $\mathbb{S}(a)$ est proprement C--harmonique si et seulement si
 \begin{equation*}
  \big(|H|^2 - \frac{1}{30}(25 + \sqrt{649}) \big) \big(|H|^2 - \frac{1}{30}(25 - \sqrt{649})\big) = 0,
 \end{equation*}
 ainsi $|H|^2 = \frac{1}{30}(25 + \sqrt{649})$ ce qui donne l'hypersphère $\mathbb{S}(\frac{1}{6} \sqrt{25-5\sqrt{\frac{59}{11}}})$.
\end{proof}

En ce qui concerne les variétés qui ne sont pas pseudo--ombilicales, on a le résultat de rigidité suivant pour les tores de Clifford\,:
\begin{proposition}
 \label{proposition_tore_dim6}
 Il n'existe pas de tore de Clifford de dimension $6$ qui soit proprement C--harmonique dans $\mathbb{S}^7$.
\end{proposition}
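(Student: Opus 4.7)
The plan is to mimic the approach used for Proposition \ref{proposition_tore_dim4}, adapted to the sixth-order equation (\ref{equation_C-harmonique_dim6}). A crucial first reduction is that the dimension-$6$ definition of C--harmonicity requires $(M,g)$ to be Einstein, so I would begin by enumerating the Einstein Clifford tori in $\mathbb{S}^7$. The condition $(n_1-1)/a^2=(n_2-1)/b^2$, combined with $n_1+n_2=6$ and $a^2+b^2=1$, admits up to swapping the two factors only $(n_1,n_2)=(3,3)$ with $a=b=1/\sqrt{2}$---which is harmonic by Proposition \ref{proposition_tore_harmonique} and hence C--harmonic but not properly---and $(n_1,n_2)=(2,4)$ with $a^2=1/4$, $b^2=3/4$, which must be ruled out explicitly.

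For this remaining case the key structural fact is that $H=c\,\eta_t$ with $\eta_t$ parallel in the normal bundle, so $\delta Ti=-6c\,\eta_t$ remains in the line $\mathbb{R}\cdot\eta_t\subset\Gamma(i^*T\mathbb{S}^7)$. Using $\nabla^N\eta_t=0$, the parallelism of $A_{\eta_t}$, and $\Se(\eta_t)=6\,\eta_t$---essentially the computation in the proof of Proposition \ref{proposition_tore_biharmonique}---one obtains $(\delta d-\Se)(\eta_t)=-k\,\eta_t$ with $k=(1-b^2/a^2)n_1+(1-a^2/b^2)n_2$. Since $\delta d-\Se$ thus stabilizes $\mathbb{R}\cdot\eta_t$, iterating the scalar shifts gives
\[
(\delta d-\Se+\tfrac{2\scal}{15})(\delta d-\Se+\tfrac{\scal}{5})\,\delta Ti = \bigl(\tfrac{2\scal}{15}-k\bigr)\bigl(\tfrac{\scal}{5}-k\bigr)\,\delta Ti.
\]
For the zero-order term, the spherical identity $\R^h(X,Y)Z=g(Y,Z)X-g(X,Z)Y$ together with $\nabla^h_X \delta Ti=6c\,A_{\eta_t}(X)$ (the component along $e_i$ dropping because $|H|^2$ is constant) yields $2\tr\R^h_{\delta Ti,Ti}(\nabla^h_{Ti}\delta Ti)=12c\,(\tr A_{\eta_t})\,\delta Ti=72|H|^2\,\delta Ti$.

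Equation (\ref{equation_C-harmonique_dim6}) therefore collapses to the scalar identity $(\tfrac{2\scal}{15}-k)(\tfrac{\scal}{5}-k)=72|H|^2$. Substituting $\scal=24$, $k=-4/3$, $|H|^2=1/27$ for the Einstein $(2,4)$ torus, the left side equals $(68/15)(92/15)=6256/225$ while the right side equals $8/3=600/225$, so the identity fails and this torus is not C--harmonic. The main obstacle is justifying the second step---that $\mathbb{R}\cdot\eta_t$ really is invariant under $\delta d-\Se$, so that the operator composition becomes a product of scalars, and that the curvature-plus-connection trace has the signs and factors claimed; once those reductions are in hand, the rest is arithmetic.
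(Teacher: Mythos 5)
Your structural reductions are sound and agree with the paper's machinery: the invariance of the line $\mathbb{R}\,\eta_t$ under $\delta d-\Se$ (with $(\delta d-\Se)\,\delta Ti=-k\,\delta Ti$, which is exactly Proposition \ref{proposition_tore_biharmonique} combined with Corollaire \ref{corollaire_Delta_X,H_hypersphère_tore} and $\Se(\eta_t)=6\,\eta_t$), and the value $2\tr\R^h_{\delta Ti,Ti}(\nabla^h_{Ti}\delta Ti)=72|H|^2\,\delta Ti$, which matches the $-72|H|^2H$ term the paper extracts in its pseudo--ombilical dimension-$6$ proposition. But your route is genuinely different from the paper's, and the difference matters. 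The paper never restricts to Einstein tori: it applies equation (\ref{equation_C-harmonique_dim6}) to \emph{every} $\mathbb{T}(a,b)$ with $n_1+n_2=6$, reduces proper C--harmonicity to a single polynomial identity in $z=a^2/b^2$, and rewrites it as $(n_1z-n_1+1)^2+8\,(n_1n_2-\frac{337}{100})+(\frac{n_2}{z}-n_2+1)^2=0$, which is strictly positive since $n_1n_2\geq5$; hence \emph{no} torus satisfies the equation at all. Your argument instead disposes of the non-Einstein tori by vacuity of the dimension-$6$ definition and checks only the two Einstein cases $(3,3)$ and $(2,4)$ (your classification of these is correct). That is a defensible literal reading of the definition, but it is not the paper's operative convention: under the paper's reading (``C--harmonic means solves (\ref{equation_C-harmonique_dim6})''), your proof has a coverage gap for the non-Einstein tori, which is precisely what the paper's sum-of-squares identity closes, uniformly and without any case analysis.

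A second discrepancy to flag: throughout this section the paper substitutes $\scal=n(n-1)=42$ (this is where the factors $(n^2-n-45)(n^2-n-30)$ and the constant $\frac{24}{25}$ in its torus equation come from), whereas you use the intrinsic scalar curvature $\scal=24$ of the Einstein $(2,4)$ torus --- which is what the stated definition (``$\scal$ est la courbure scalaire de $(M,g)$'') literally requires. Your arithmetic $(\frac{68}{15})(\frac{92}{15})=\frac{6256}{225}\neq\frac{8}{3}$ is correct under your convention, and under the paper's convention one gets $(\frac{104}{15})(\frac{146}{15})\neq\frac{8}{3}$, so the verdict on the $(2,4)$ torus is the same either way; but be aware that the scalar identity you test is not the one the paper manipulates, and in other situations the two conventions would not give the same answer.
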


\begin{proof}
 D'après la proposition \ref{proposition_tore_biharmonique} et le corollaire \ref{corollaire_Delta_X,H_hypersphère_tore}, le tore de Clifford $\mathbb{T}(a,b)$ est proprement C--harmonique si et seulement si
 \begin{eqnarray*} 
  0 &=& n_1^2z^2 - 2n_1(n_1-1)z + 6n_1n_2 - \frac{24}{25} - \frac{2n_2(n_2-1)}{z} + \frac{n_2^2}{z^2} \\
    &=& (n_1z - n_1 + 1)^2 + 8 (n_1n_2 - \frac{337}{100}) +(\frac{n_2}{z} - n_2 + 1)^2,
 \end{eqnarray*}
 où l'on a posé $z:=\frac{a^2}{b^2}$, il suffit de remarquer que $n_1n_2\geq5$.
\end{proof}

\subsection{Appendice}

\begin{lemme}
 \label{lemme_nabla_nabla}
 Soit $M^m$ une sous variété de $\mathbb{S}^n$, alors on a les égalités suivantes\,:
 \begin{eqnarray*}
    \nabla_{\textstyle{V}} \nabla_{\textstyle{V}} U
     &=& \nabla^T_{\textstyle{V}} \nabla^T_{\textstyle{V}} U - A_{B(V,U)}(V) + 2\, B(V,\nabla^T_{\textstyle{V}} U) + \nabla^N_{\textstyle{U}} B(V,V), \\
   \nabla_{\textstyle{V}} \nabla_{\textstyle{V}} X
     &=& \nabla^N_{\textstyle{V}} \nabla^N_{\textstyle{V}} X - B\big(V,A_X(V)\big) - g\big(\nabla^N B(V,V),X\big) - 2\, A_{\nabla^N_{V} X}(V),
 \end{eqnarray*}
 où $(U,V) \in TM$ et $X \in NM$. Si $M$ est pseudo--ombilicale, alors on obtient\,:
 \begin{equation*}
   \tr A_{\nabla^N_{(\,\cdot\,)} H}(\,\cdot\,) = - \frac{m-2}{2}\, d|H|^2.
 \end{equation*}
\end{lemme}

\begin{proof} 
 Soit $(U,V) \in TM$, par un calcul direct, on montre que
 \begin{equation*}
  \nabla_{\textstyle{V}} \nabla_{\textstyle{V}} U = \nabla^T_{\textstyle{V}} \nabla^T_{\textstyle{V}} U - A_{B(V,U)}(V) + B(V,\nabla^T_{\textstyle{V}} U) + \nabla^N_{\textstyle{V}} B(V,U),
 \end{equation*}
 alors en intervertissant les dérivées on obtient\,:
 \begin{eqnarray}
 \label{egalite_nabla_B}
  \nabla^N_{\textstyle{V}} B(V,U) 
    &=& p_N\big( \nabla_{\textstyle{U}} \nabla^T_{\textstyle{V}} V + \nabla_{\textstyle{U}} B(V,V)  \big) + B(\nabla^T_{\textstyle{V}} U,V) \notag\\
    &=& \nabla^N_{\textstyle{U}} B(V,V) + B(\nabla^T_{\textstyle{V}} U,V),
 \end{eqnarray}
 ce qui donne la première égalité. Soit $X \in NM$, alors on obtient directement\,:
 \begin{equation*}
  \nabla_{\textstyle{V}} \nabla_{\textstyle{V}} X = \nabla^N_{\textstyle{V}} \nabla^N_{\textstyle{V}} X - B\big(V,A_X(V)\big) - \nabla^T_{\textstyle{V}} A_X(V) - A_{\nabla^N_{V} X}(V).
 \end{equation*}
 et d'autre part, avec l'égalité (\ref{egalite_nabla_B}),
 \begin{eqnarray}
  \label{egalite_nabla_A}
  \nabla^T_{\textstyle{V}} A_{\textstyle{X}} (V)
    &=& g\big(\nabla^N_{\textstyle{V}} B(V,\,\cdot\,),X\big) + A_{\nabla^N_{V} X}(V) - g\big(B(V,\nabla^T_{\textstyle{V}} \,\cdot\,),X \big) \notag\\
    &=& g\big(\nabla^N B(V,V),X\big) + A_{\nabla^N_{V} X}(V),
 \end{eqnarray}
 ce qui donne la deuxième égalité. Pour la troisième égalité, si $M$ est pseudo--ombilicale, alors $\tr \nabla^T_{(\,\cdot\,)} A_H (\,\cdot\,) = d|H|^2$, on pose $X=H$ dans (\ref{egalite_nabla_A}) et on prend la trace par rapport à $g$.
\end{proof}

\begin{proposition}
 \label{proposition_Delta_X,U,H}
 Soit $M^m$ une sous variété de $\mathbb{S}^n$, alors on a les égalités suivantes\,:
 \begin{eqnarray*} 
  \Delta X &=& \Delta^N X + \tr B\big(\,\cdot\,,A_X(\,\cdot\,)\big) + m\, g(\nabla_{\textstyle{.}}H,X) + 2\, \tr A_{\nabla^N_{(\,\cdot\,)}X}(\,\cdot\,), \\
  \Delta U &=& \Delta^T U + \tr A_{B(\,\cdot\,,U)}(\,\cdot\,) - m\nabla^N_{\textstyle{U}} H - 2\, \tr B(\,\cdot\,,\nabla^T_{(\,\cdot\,)} U) \\
  \Delta H &=& \Delta^N H + \tr B\big(\,\cdot\,,A_H(\,\cdot\,)\big) + \frac{m}{2}\, d|H|^2 + 2\, \tr A_{\nabla^N_{(\,\cdot\,)}H}(\,\cdot\,),
 \end{eqnarray*}
 où $X\in NM$, $U\in TM$ et la trace est prise par rapport à $g_{\mathbb{S}}$. Si $M$ est une sous variété pseudo--ombilicale, alors, on obtient pour le laplacien de $H$\,:
 \begin{equation*}
  \Delta H = \Delta^N H + m|H|^2 H - \frac{m-4}{2}\, d|H|^2.
 \end{equation*}
\end{proposition}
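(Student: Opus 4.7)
La stratégie est de tracer directement les identités du lemme \ref{lemme_nabla_nabla} par rapport à une base orthonormée $(e_1,\ldots,e_m)$ de $T_pM$, géodésique en $p$, puis d'identifier chaque trace à un objet géométrique attendu. Avec la convention de signe positive $\Delta = -\sum_i \nabla_{e_i} \nabla_{e_i}$, et de même pour $\Delta^N$ et $\Delta^T$, les trois premières égalités de la proposition s'obtiennent toutes en sommant membre à membre les identités du lemme puis en changeant de signe.

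Pour la formule sur $\Delta X$ ($X \in NM$), j'appliquerais la seconde identité du lemme \ref{lemme_nabla_nabla}. Les contributions $-B(V, A_X(V))$ et $-2 A_{\nabla^N_V X}(V)$ fournissent, après sommation en $V=e_i$, les termes $\tr B(\cdot,A_X(\cdot))$ et $2 \tr A_{\nabla^N_{(\cdot)}X}(\cdot)$ avec les bons signes. Le seul terme un peu délicat est $\sum_i g(\nabla^N B(e_i,e_i), X)$\,: il faut l'interpréter comme le champ tangent dual de la $1$--forme $W \mapsto g(\nabla^N_W(\sum_i B(e_i,e_i)), X) = m\, g(\nabla^N_W H, X)$, ce qui livre exactement le terme $m\,g(\nabla_{\cdot}H, X)$ de l'énoncé.

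Pour $\Delta U$ ($U \in TM$), je procéderais symétriquement avec la première identité du lemme ; le seul terme non immédiat est $\sum_i \nabla^N_U B(e_i,e_i)$, qui vaut $m\, \nabla^N_U H$ par définition de la courbure moyenne, d'où la contribution $-m\nabla^N_U H$. La formule pour $\Delta H$ est alors un cas particulier, obtenu en posant $X=H$ dans la première égalité et en utilisant l'identité élémentaire $g(\nabla^N_{(\cdot)} H, H) = \tfrac{1}{2}\, d|H|^2$, qui transforme $m\,g(\nabla_{\cdot} H, H)$ en $\tfrac{m}{2}\, d|H|^2$.

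Pour le cas pseudo--ombilical enfin, l'hypothèse $A_H = |H|^2 Id_{TM}$ donne immédiatement $\tr B(\cdot, A_H(\cdot)) = |H|^2 \tr B = m |H|^2 H$, et la troisième identité du lemme \ref{lemme_nabla_nabla} fournit $2 \tr A_{\nabla^N_{(\cdot)}H}(\cdot) = -(m-2)\, d|H|^2$ ; la somme livre directement la formule annoncée. L'ensemble n'est qu'une comptabilité soigneuse des termes, le seul point demandant de la vigilance étant la correcte identification de $g(\nabla^N B(V,V), X)$ à un champ tangent (et non à un scalaire) ainsi que la cohérence des conventions de signe pour les différents laplaciens.
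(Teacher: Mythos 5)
Votre démonstration est correcte et suit exactement la voie du texte, dont la preuve se réduit à \og application directe du lemme \ref{lemme_nabla_nabla}\fg\,: vous ne faites qu'expliciter cette application (trace en repère orthonormé géodésique avec la convention $\Delta=-\tr\nabla\nabla$, identification de $\sum_i g(\nabla^N B(e_i,e_i),X)$ au champ tangent $m\,g(\nabla^N_{(\,\cdot\,)}H,X)$ via $\tr B = mH$, puis $X=H$ et la troisième identité du lemme pour le cas pseudo--ombilical). Tous les signes et coefficients, y compris le $-\frac{m-4}{2}\,d|H|^2$ final issu de $\frac{m}{2}-(m-2)$, sont exacts.
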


\begin{proof}
 Application directe du lemme \ref{lemme_nabla_nabla}.
\end{proof}

Dans le cas de l'hypersphère et du tore de Clifford, on obtient\,:
\begin{corollaire}
 \label{corollaire_Delta_X,H_hypersphère_tore}
 On obtient $\Delta H_s = (n-1)\, \frac{b^2}{a^2}\, H_s$ dans le cas de l'hypersphère $\mathbb{S}(a)$ et $\Delta H_t = \big(\frac{b^2}{a^2}\, n_1 + \frac{a^2}{b^2}\, n_2 \big)\, H_t$ dans le cas du tore de Clifford $\mathbb{T}(a,b)$.
\end{corollaire}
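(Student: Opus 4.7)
The plan is to apply Proposition~\ref{proposition_Delta_X,U,H} with $X=H$ in each case, and to exploit the fact that in both examples the mean curvature is \emph{parallel in the normal bundle}. Indeed, Propositions~\ref{proposition_hypersphère} and~\ref{proposition_tore_harmonique} supply a unit normal ($\eta_s$, respectively $\eta_t$) satisfying $\nabla^N \eta = 0$, and each mean curvature is a constant scalar multiple of the corresponding $\eta$. Hence $\nabla^N H = 0$, and this one observation kills three of the four terms of Proposition~\ref{proposition_Delta_X,U,H}: the rough normal Laplacian $\Delta^N H$ vanishes because $H$ is parallel, $\tfrac{m}{2}\,d|H|^2$ vanishes because $|H|$ is a constant, and $2\tr A_{\nabla^N_{(\cdot)} H}(\,\cdot\,)$ vanishes trivially. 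Only the curvature term remains:
\[
 \Delta H = \tr B\big(\,\cdot\,, A_H(\,\cdot\,)\big).
\]

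For the hypersphere $\mathbb{S}(a)$, which is pseudo--ombilicale with $A_{H_s} = \tfrac{b^2}{a^2}\,Id_{TM}$ and $m=n-1$, the pseudo--ombilicale version of Proposition~\ref{proposition_Delta_X,U,H} gives the answer in one line: $\Delta H_s = m|H_s|^2 H_s = (n-1)\tfrac{b^2}{a^2}\,H_s$. For the Clifford torus $\mathbb{T}(a,b)$, which is \emph{not} pseudo--ombilicale, I compute the residual trace by hand. Since the normal bundle has rank one and is spanned by $\eta_t$, I write $B(X,Y) = g(A_{\eta_t} X, Y)\,\eta_t$ and $H_t = c\,\eta_t$ with $c = \tfrac{1}{m}\big(\tfrac{b}{a}n_1 - \tfrac{a}{b}n_2\big)$, so that $A_{H_t} = c\,A_{\eta_t}$ and
\[
 \tr B\big(\,\cdot\,, A_{H_t}(\,\cdot\,)\big) = c\,\tr(A_{\eta_t}^2)\,\eta_t.
\]
From the block form $A_{\eta_t} = \big(\tfrac{b}{a}\,Id_1, -\tfrac{a}{b}\,Id_2\big)$ of Proposition~\ref{proposition_tore_harmonique}, I read off $\tr(A_{\eta_t}^2) = \tfrac{b^2}{a^2}n_1 + \tfrac{a^2}{b^2}n_2$, and reintroducing the factor $c\,\eta_t = H_t$ regroups the result into the announced scalar multiple of $H_t$.

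There is no genuine obstacle: once $\nabla^N H = 0$ is observed, everything reduces to the explicit shape operators already recorded in Propositions~\ref{proposition_hypersphère} and~\ref{proposition_tore_harmonique}. The only mild trap is to remember that the pseudo--ombilicale shortcut is available only for the hypersphere, so for the torus the term $\tr B\big(\,\cdot\,, A_H(\,\cdot\,)\big)$ of Proposition~\ref{proposition_Delta_X,U,H} must be retained and evaluated directly from the block diagonal form of $A_{\eta_t}$.
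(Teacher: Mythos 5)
Your proposal is correct and follows exactly the route the paper intends: the corollary is stated as a direct consequence of Proposition~\ref{proposition_Delta_X,U,H}, and your argument ($\nabla^N H=0$ and $|H|$ constant kill all terms except $\tr B\big(\,\cdot\,,A_H(\,\cdot\,)\big)$, which is then evaluated from the explicit Weingarten operators of Propositions~\ref{proposition_hypersphère} and~\ref{proposition_tore_harmonique}) is precisely the omitted verification, with both final computations checking out ($m|H_s|^2=(n-1)\tfrac{b^2}{a^2}$ for the hypersphere, $\tr(A_{\eta_t}^2)=\tfrac{b^2}{a^2}n_1+\tfrac{a^2}{b^2}n_2$ for the torus).
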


\end{document}